\newtheorem{theorem}{Theorem}[section]
\newtheorem{lemma}[theorem]{Lemma}
\theoremstyle{definition}
\newtheorem{definition}[theorem]{Definition}
\numberwithin{equation}{section}
\DeclareMathOperator{\diam}{diam} 
\newcommand{\be}{\begin{equation}}
\newcommand{\ee}{\end{equation}}
\DeclareMathOperator{\loc}{loc}
\def\Xint#1{\mathchoice 
 {\XXint\displaystyle\textstyle{#1}}%
{\XXint\textstyle\scriptstyle{#1}}%
{\XXint\scriptstyle\scriptscriptstyle{#1}}%
 {\XXint\scriptscriptstyle\scriptscriptstyle{#1}}%
 \!\int}
\def\XXint#1#2#3{{\setbox0=\hbox{$#1{#2#3}{\int}$}
 \vcenter{\hbox{$#2#3$}}\kern-.5\wd0}}
 \def\dashint{\Xint-}
\begin{document}
\title{Removable sets for Orlicz-Sobolev spaces}
\author{Nijjwal Karak}
\thanks{The author was partially supported by the Academy of Finland grant number 131477}
\address{Department of Mathematics and Statistics, University of Jyv\"askyl\"a, P.O. Box 35, FI-40014, Jyv\"askyl\"a, Finland}
\email{nijjwal.n.karak@jyu.fi}
\begin{abstract}
We study removable sets for the Orlicz-Sobolev space $W^{1,\Psi},$ for functions of the form $\Psi(t)=t^p\log^{\lambda}(e+t).$ We show that $(p,\lambda)$-porous sets lying in a hyperplane are removable and that this result is essentially sharp.
\end{abstract}
\maketitle
\section{Introduction}
In this paper, we consider removability problems for Orlicz-Sobolev spaces $W^{1,\Psi}$ with $\Psi(t)=t^p\log^{\lambda}(e+t).$ We generalize results of Koskela in \cite{Kos99} for the usual Sobolev spaces. Let us first recall some definitions. Let $\Omega$ be an open set in $\mathbb{R}^n,$ $n\geq 2.$ We say that $u$ is in the Sobolev space $W^{1,p}(\Omega)$ if $u\in L^p(\Omega),$ $1\leq p<\infty,$ and there are functions $\partial_j u\in L^p(\Omega),$ $j=1,\ldots,n,$ so that
\begin{align}\label{!}
\int_{\Omega}u\partial_j \phi\, dx= -\int_{\Omega}\phi\partial_j u\, dx
\end{align}
for each test function $\phi\in C_0^1(\Omega)$ and all $1\leq j\leq n.$ If $E\subset\mathbb{R}^n$ is a closed set of zero Lebesgue $n$-measure, then we say that $E$ is removable for $W^{1,p}$ if $W^{1,p}(\mathbb{R}^n\setminus E)=W^{1,p}(\mathbb{R}^n)$ as sets. It is not hard to check that $E$ is removable if and only if the functions $\partial_j u\in L^p(\mathbb{R}^n\setminus E)$ satisfy \eqref{!} (with $\Omega=\mathbb{R}^n$) for each $\phi\in C_0^1(\mathbb{R}^n)$ and not only for $\phi\in C_0^1(\mathbb{R}^n\setminus E).$ Similarly to the definition of $W^{1,p}(\Omega),$ $W^{1,\Psi}(\Omega)$ refers to the class of functions in $L^{\Psi}(\Omega)$ with $\partial_j u\in L^{\Psi}(\Omega),$ $j=1,2,\ldots,n.$
\begin{definition}
If $E\subset\mathbb{R}^n$ is a closed set of zero Lebesgue $n$-measure, then we say that $E$ is removable for $W^{1,\Psi}$ if $W^{1,\Psi}(\mathbb{R}^n\setminus E)=W^{1,\Psi}(\mathbb{R}^n)$ as sets.
\end{definition}
It is easy to see that removability is a local question as in the classical case. That is, $E$ is removable for $W^{1,\Psi}$ if and only if for each $x\in E$ there is $r>0$ so that $W^{1,\Psi}(B(x,r)\setminus E)=W^{1,\Psi}(B(x,r))$ as sets. Moreover, if $E\subset\Omega$ for some open set $\Omega,$ then $E$ is removable for $W^{1,\Psi}$ if and only if $W^{1,\Psi}(\Omega\setminus E)=W^{1,\Psi}(\Omega)$ as sets. Observe that, to verify the removability, it is enough to consider the functions $u\in C^1(\Omega\setminus E)\cap W^{1,\Psi}(\Omega\setminus E)$ as $W^{1,\Psi}$ is Banach space and smooth functions are dense in $W^{1,\Psi}(\Omega\ E)$ for a doubling function $\Psi.$\\
\indent In this paper, we study the removability of compact sets $E\subset\mathbb{R}^{n-1}.$ Given $1<p\leq n,$ Koskela showed in \cite{Kos99} that there are compact sets $E\subset\mathbb{R}^{n-1}\subset\mathbb{R}^n$ that are removable for $W^{1,p}(\mathbb{R}^n),$ but not for $W^{1,q}(\mathbb{R}^n)$ for any $q<p.$ This was done by introducing the class of $p$-porous sets. It is then natural to ask if a similar result holds for $W^{1,\Psi}(\mathbb{R}^n),$ for Orlicz functions $\Psi(t)=t^p\log^{\lambda}(e+t)$ in terms of $\lambda.$ We prove that this is indeed the case by studying a generalization of $p$-porosity, the $(p,\lambda)$-porosity defined in Section 4 below.\\
    
\noindent $\textbf{Theorem A.}$ Let $E\subset\mathbb{R}^{n-1}$ be compact. Let $1<p<n, \lambda\in\mathbb{R}$ or $p=1, \lambda>0$ or $p=n, \lambda\leq n-1.$ If $E$ is $(p,\lambda)$-porous, then $E$ is removable for $W^{1,\Psi}$ in $\mathbb{R}^n,$ where $\Psi(t)=t^p\log^{\lambda}(e+t).$ Moreover, for each pair $(p,\lambda)$ as above, there is a $(p,\lambda)$-porous set $E\subset\mathbb{R}^{n-1}$ that is not removable for $W^{1,\Psi'}$ for $\Psi'(t)=t^p\log^{\lambda-\epsilon}(e+t)$ for any $\epsilon>0.$\\

\indent The restrictions $\lambda>0$ for $p=1$ and $\lambda\leq n-1$ for $p=n$ are natural, see the discussion in Section 3 below.\\
\indent The main idea behind the removability of $(p,\lambda)$-porous sets is the following. As mentioned above, it suffices to prove that \eqref{!} holds for each $u\in C^1(\Omega\setminus E)\cap W^{1,\Psi}(\Omega\setminus E)$ and for each $\phi\in C_0^1(\Omega).$ By the Fubini theorem and the usual integration by parts it suffices to show that the one sided limits $\lim_{t\rightarrow 0+}u(x',t)$ and $\lim_{t\rightarrow 0-}u(x',t)$ coincide for $H^{n-1}$-a.e. $x=(x',0)\in E.$ This is established via sharp capacity estimates and the existence of \lq\lq holes\rq\rq\ in $E$ guaranteed by the porosity condition. The same idea was used also in \cite{Kos99}, but the necessary estimates and even the definition of porosity is more novel in our setting.\\ 
\indent  Similarly to \cite{Kos99}, Theorem A yields the following result on Orlicz-Poincar\'e inequalities:\\
$\textbf{Corollary.}$ Let $n\geq 2$ be an integer, $1<p<n$ and $\lambda\in\mathbb{R}.$ There is a locally compact $n$-regular metric space that supports an Orlicz $(p,\lambda)$-Poincar\'e inequality but does not support an Orlicz $(p,\lambda-\epsilon)$-Poincar\'e inequality for any $\epsilon>0.$\\
\indent The above corollary shows that there is no self-improvement in an Orlicz $(p,\lambda)$-Poincar\'e inequality in the non-complete setting (notice that $\mathbb{R}^n\setminus E$ is not complete). This partially motivates this note. For a complete $n$-regular space, an Orlicz $(p,\lambda)$-Poincar\'e inequality, $1<p<\infty,$ always improves even in $p$ when $\lambda<p-1.$ For the case $\lambda=0,$ see \cite{KZ08} and for general $\lambda,$ see \cite{Dej}.\\
\indent For the definition of an Orlicz $(p,\lambda)$-Poincar\'e inequality see Section 2 below. The definition of porosity is given in Section 4.\\
\indent In order to make this paper more readable, we organize it as follows. In Section 2 we recall definitions and preliminary results. As Theorem A admits a more elementary proof in the planar case, we begin by proving Theorem A in Section 3 in the plane. In Section 4 we describe the modifications necessary for handling the higher dimensional situation.\\
\\
\indent {\textit{Acknowledgement}.} I wish to thank my advisor Professor Pekka Koskela for suggesting the problem addressed in this paper.
\section{Notation and preliminaries}
A function $\Psi:[0,\infty)\rightarrow [0,\infty)$ is a Young function if
\begin{align*}
\Psi(s)=\int_0^s \psi(t)\, dt,
\end{align*}
where $\psi:[0,\infty)\rightarrow [0,\infty)$ with $\psi(0)=0,$ is an increasing, left-continuous function which is neither identically zero nor identically infinite on $(0,\infty).$ A Young function $\Psi$ is convex, increasing, left-continuous and satisfies
\begin{align*}
\Psi(0)=0, \ \lim_{t\rightarrow\infty}\Psi(t)=\infty . 
\end{align*}
The generalized inverse of a Young function $\Psi,$ $\Psi^{-1}:[0,\infty]\rightarrow[0,\infty],$ is defined by the formula
\begin{equation*}
\Psi^{-1}(t)=\inf\{s:\Psi(s)>t\},
\end{equation*}
where $\inf(\emptyset)=\infty.$ A Young function $\Psi$ and its generalized inverse satisfy the double inequality
\begin{equation*}
\Psi(\Psi^{-1}(t))\leq t \leq \Psi^{-1}(\Psi(t))
\end{equation*}
for all $t\geq 0.$ In this article we will only consider the Young functions $\Psi(t)=t^p\log^{\lambda}(e+t),$ $1\leq p\leq n,\ \lambda\in\mathbb{R}.$ For a general Young function $\Psi,$ the Orlicz space $L^{\Psi}(\Omega)$ is defined by
\begin{align*}
L^{\Psi}(\Omega)=\{u:\Omega\rightarrow[-\infty,\infty]: u ~\text{measurable},~ \int_{\Omega}\Psi(\alpha\vert u\vert)\, dx<\infty ~\text{for some}~ \alpha>0\}.
\end{align*}
As in the theory of $L^p$-spaces, the elements in $L^{\Psi}(\Omega)$ are actually equivalence classes consisting of functions that differ only on a set of measure zero. The Orlicz space $L^{\Psi}(\Omega)$ is a vector space and, equipped with the Luxemburg norm
\begin{align*}
\vert\vert u\vert\vert_{L^{\Psi}(\Omega)}=\inf \{k>0: \int_{\Omega}\Psi\left(\frac{\vert u\vert}{k}\right)\, dx\leq 1\},
\end{align*}
a Banach space, see \cite[Theorem 3.3.10]{RR91}. A function $u\in L^{\Psi}(\Omega)$ is in the Orlicz-Sobolev space $W^{1,\Psi}(\Omega)$ if its weak partial derivatives (distributional derivatives) $\partial_j u$ belong to $L^{\Psi}(\Omega)$ for all $1\leq j\leq n.$ The space $W^{1,\Psi}(\Omega)$ is a Banach space with respect to the norm
\begin{align*}
\vert\vert u\vert\vert_{W^{1,\Psi}(\Omega)}=\vert\vert u\vert\vert_{L^{\Psi}(\Omega)}+\vert\vert \nabla u\vert\vert_{L^{\Psi}(\Omega)},
\end{align*}
where $\nabla u=(\partial_1 u,\ldots,\partial_n u).$ For a proof, see for example \cite[Theorem 9.3.3]{RR91}. For more about Young functions, Orlicz spaces and Orlicz-Sobolev spaces, see e.g. \cite{Tuo04,RR91}. Recall that a Young function $\Psi:[0,\infty)\rightarrow [0,\infty)$ is said to be doubling if there is a constant $C>0,$ called a doubling constant of $\Psi,$ such that 
$$\Psi(2t)\leq C\Psi(t)$$
for each $t\geq 0.$ Sometimes the doubling condition is also called the $\Delta_2$-condition.\\
\indent Let us also recall the Poincar\'e and the $\Psi$-Poincar\'e inequalities. A pair $u\in L_{\loc}^1(\Omega)$ and a measurable function $g\geq 0$ satisfy a $(1,p)$-Poincar\'e inequality, $p\geq 1,$ if there exist constants $C_p>0$ and $\tau\geq 1,$ such that
\begin{align}\label{poincare}
\dashint_B\vert u-u_B\vert\, dx\leq C_pr\left(\dashint_{\tau B}g^p\, dx\right)^p
\end{align}
for each ball $B=B(x,r)$ satisfying $\tau B\subset\Omega.$ Recall that if $\Omega\subset\mathbb{R}^n$ and $u\in W_{\loc}^{1,1}(\Omega),$ then the inequality \eqref{poincare} holds for $g=\vert \nabla u\vert$ with $\tau=1,$ $p=1$ and the constant depending only on $n.$ 
Similarly, a function $u\in L_{\loc}^1(\Omega)$ and a measurable function $g\geq 0$ satisfy a $\Psi$-Poincar\'e inequality, if there exist constants $C_{\Psi}>0$ and $\tau\geq 1,$ such that
\begin{align}\label{psipoincare}
\dashint_B\vert u-u_B\vert\, dx\leq C_{\Psi}r\Psi^{-1}\left(\dashint_{\tau B}\Psi(g)\, dx\right)
\end{align}
for each ball $B=B(x,r)$ satisfying $\tau B\subset\Omega.$ 
Here $u_B$ is the average of $u$ in $B(x,r)$ and the barred integrals are the averaged integrals, that is $\dashint_A v\, d\mu=\mu(A)^{-1}\int_A v\, d\mu.$
\section{The planar case}
Let $E\subset(0, 1)$ be a compact set in $\mathbb{R}\subset\mathbb{R}^2.$ We say that $E$ is $(p, \lambda)$-removable if $E$ is removable for $W^{1, \Psi}$ for the function $\Psi(t)=t^p\log^{\lambda}(e+t),$ where $p\in\left[1, \infty\right)$ and $\lambda$ is any real number. It is easy to check that $(p,\lambda)$-removability is equivalent to the requirement that for each  $u\in W^{1, \Psi}(B(0,2)\setminus E)\cap C^1(B(0,2)\setminus E),$ $u^+(x)=u^-(x)$ holds for $H^1$-a.e. $x\in E.$ Here $u^+(x)=\lim_{t\rightarrow 0+}u(x_1,t),$ $u^-(x)=\lim_{t\rightarrow 0-}u(x_1,t)$ and these limits exist for $H^1$-a.e. $x=(x_1,0)\in E,$ by the Fubini theorem and the fundamental theorem of calculus. Removability of a set $E$ may depend on the exponents $p$ and $\lambda.$ Indeed, when $p>2,~\lambda\in\mathbb{R}$ and $p=2,~\lambda>1$ the complementary intervals of $E$ in $(0, 1)$ play no role for the removability, since in this case any totally disconnected closed set $E\subset (0, 1)$ is removable for $W^{1,\Psi}$ (see \cite[prop.2.1]{Kos99}, \cite[sec.9.3]{RR91} and \cite[sec.2]{Ada77}). The point here is that, for these values of $p,$ $\lambda,$ one has $u^+(x)=u^-(x)$ for all $x=(x_1,0).$\\
\indent The idea behind our definition of porosity and its applicability is the the following. If a continuous function $u\in W^{1,\Psi}$ equals one on $I_1$ and zero on $I_2$ in Figure \ref{fig1}, then using a chaining argument and the usual Poincar\'e inequality one can verify the capacity type estimate
\begin{equation*}
\int_{B(x,r)}\Psi(\vert\nabla u\vert)\geq cs^{2-p}\log^{\lambda}\left(\frac{1}{s}\right)
\end{equation*}
for $1\leq p<2$, where $s=\diam(I_2)$ and one has a similar estimate for $p=2$ also.
\begin{figure}[h!]
\begin{center}
\begin{tikzpicture}[scale=2]
\draw (0,0) circle (1);
\draw (1.005,1.005) node {$B(x,r)$};
\draw (0.4,0)--(0.7,0);
\draw (0,0)--(0,1);
\draw (-0.20,0.5) node {$I_1$};
\draw (0.55,0.15) node {$I_2$};
\draw (0.3,0.65)  node {$u=1$};
\draw (0.55,-0.15) node {$u=0$};
\end{tikzpicture}
\end{center}
\caption{}
\label{fig1}
\end{figure}
On the other hand, $\int_{B(x,r)}\Psi(\vert\nabla u\vert)=o(r),$ for $H^1$-a.e. $x=(x_1,0).$ This leads us to the following definition.
\begin{definition}
We say that $E\subset(0,1)$ is $(p, \lambda)$-porous, $1\leq p<2$ and $\lambda\in\mathbb{R},$ if for $H^1$-a.e. $x=(x_1, 0)\in E$ there is a sequence of numbers $r_i>0$ and a constant $C_x>0$ such that $r_i\rightarrow 0$ as $i\rightarrow \infty,$ and each interval $(x_1-r_i, x_1+r_i)$ contains an interval $I_i\subset [0, 1]\setminus E$ with $H^1(I_i)^{2-p}\log^{\lambda}(1/H^1(I_i))\geq C_xr_i.$ We say that $E$ is $(2, \lambda)$-porous if we have the same as above with $\log^{\lambda-1}(1/H^1(I_i))\geq C_xr_i$ when $\lambda<1$ and $[\log\log(1/H^1(I_i))]^{-1}\geq C_xr_i$ when $\lambda=1.$ 
\end{definition}
When $\lambda=0,$ the above porosity condition is same as that of \cite{Kos99}. Notice that for $p=1,$ only the case $\lambda>0$ is non-trivial above in the sense that there are no $(1,\lambda)$-porous sets when $\lambda<0$ and a $(1,0)$-porous set necessarily has length zero.\\
\indent We begin by showing that porous sets are removable, a part of our main theorem.
\begin{theorem}\label{0}
If $E$ is $(p, \lambda)$-porous, $1\leq p< 2$ and $\lambda\in\mathbb{R},$ then $E$ is $(p, \lambda)$-removable. This is also true for $p=2$ and $\lambda\leq 1.$
\end{theorem}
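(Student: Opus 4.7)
By the reformulation preceding Definition 3.1 (built on Fubini and the fundamental theorem of calculus), it suffices to show that for every $u\in W^{1,\Psi}(B(0,2)\setminus E)\cap C^1(B(0,2)\setminus E)$ one has $u^+(x)=u^-(x)$ for $H^1$-a.e.\ $x\in E$. Assume for contradiction that there is $\delta>0$ for which $F:=\{x\in E:|u^+(x)-u^-(x)|\ge 2\delta\}$ has positive $H^1$-measure. Using the Lebesgue density theorem for $H^1|_F$, combined with a Fubini argument applied to the integrable function $\Psi(|\nabla u|)$ (integrable because $\Psi$ is doubling), I would select a point $x_0=(x_1^0,0)\in F$ enjoying: (a) the limits $u^{\pm}(x_0)$ exist with $|u^+(x_0)-u^-(x_0)|\ge 2\delta$; (b) the porosity condition holds at $x_0$, yielding $C_{x_0}>0$, $r_i\to 0^+$, and holes $I_i\subset (x_1^0-r_i,x_1^0+r_i)\setminus E$ of length $s_i$; and (c) $r^{-1}\int_{B(x_0,r)}\Psi(|\nabla u|)\,dy\to 0$ as $r\to 0^+$. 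The role of (c) is that it will be violated by the capacity lower bound produced from (a) and (b).

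The technical heart is the capacity estimate sketched in the introduction: $\int_{B(x_0,2r_i)}\Psi(|\nabla u|)\,dy\ge c(\delta)\,s_i^{2-p}\log^\lambda(1/s_i)$ when $1\le p<2$, together with its $\log$-weighted counterpart for $p=2$. To prove it I would combine a chaining argument with the $\Psi$-Poincar\'e inequality. The set $B(x_0,2r_i)\setminus E$ is path-connected through $I_i$, and a path from $(x_1^0,\eta)$ (where $u$ is within $\delta/10$ of $u^+(x_0)$ for small $\eta$) to $(x_1^0,-\eta)$ (where $u$ is similarly close to $u^-(x_0)$) is forced to cross a ball $B_0$ of radius $\sim s_i$ centered on $I_i$. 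A dyadic chain of balls based at that bottleneck, supplemented by short horizontal bridges in the upper and lower halves to reach the midpoint of $I_i$, telescopes via the $\Psi$-Poincar\'e inequality so that $\sum_k\rho_k\Psi^{-1}(\dashint_{B_k}\Psi(|\nabla u|))$ dominates $2\delta-o(1)$. Because $\Psi^{-1}(A/\rho^2)\sim (A/\rho^2)^{1/p}\log(1/\rho)^{-\lambda/p}$ for $\rho$ small, the dominant contribution comes from the smallest scale $\rho\sim s_i$, and rearranging produces exactly the claimed lower bound, the $\log^\lambda(1/s_i)$ factor coming from the logarithmic weight built into $\Psi^{-1}$.

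Once the capacity estimate is in hand, substituting $s_i^{2-p}\log^\lambda(1/s_i)\ge C_{x_0}r_i$ from the porosity gives $\int_{B(x_0,2r_i)}\Psi(|\nabla u|)\,dy\ge c(\delta)C_{x_0}r_i$, contradicting (c) as $i\to\infty$. For the critical case $p=2$ with $\lambda\le 1$ the polynomial factor $s^{2-p}$ is absent, so the gain must be produced entirely by the logarithmic count of chain scales; this is precisely why the porosity definition shifts to $\log^{\lambda-1}(1/s_i)$ for $\lambda<1$ and to $[\log\log(1/s_i)]^{-1}$ for $\lambda=1$, and the same chaining scheme adapts after corresponding bookkeeping on the log exponents. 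The main obstacle I expect is carrying out this Orlicz chain so that the exponent of the logarithm in the capacity estimate matches the one in the porosity definition exactly: controlling how the non-homogeneity of $\Psi^{-1}$ propagates through the dyadic radii, and checking that the horizontal bridges do not dilute the lower bound, is the delicate computation.
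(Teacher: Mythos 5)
Your high-level skeleton is the same as the paper's: reduce to showing $u^+(x)=u^-(x)$ for a.e.\ $x\in E$, pick a point where porosity holds, where the Lebesgue-type density $\lim_{r\to 0}r^{-1}\int_{B(x,r)}\Psi(|\nabla u|)=0$ holds, and where the one-sided limits differ, then run a chain of balls from the upper half through the hole $I_i$ to contradict the density bound. However, the computational heart of your argument contains a real gap.

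You write ``$\Psi^{-1}(A/\rho^2)\sim(A/\rho^2)^{1/p}\log(1/\rho)^{-\lambda/p}$ for $\rho$ small,'' where $A=\int_{B_k}\Psi(|\nabla u|)$ and $\rho$ is the radius of $B_k$. This is false in general: by $\Psi^{-1}(t)\approx t^{1/p}/\log^{\lambda/p}(e+t)$ one gets $\Psi^{-1}(A/\rho^2)\approx(A/\rho^2)^{1/p}/\log^{\lambda/p}(e+A/\rho^2)$, and there is no available lower bound on $A/\rho^2$ that lets you replace $\log(e+A/\rho^2)$ by $\log(1/\rho)$. If the gradient is nearly vanishing on $B_k$, then $A/\rho^2$ is small, the logarithm is of order one, and (for $\lambda>0$) your claimed bound is strictly \emph{smaller} than the truth, which means the telescoped inequality $\delta\lesssim\sum_k\rho_k\Psi^{-1}(\dashint_{B_k}\Psi(|\nabla u|))$ can be satisfied without $\int\Psi(|\nabla u|)$ being large; the argument does not close. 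Put differently, for $\lambda\ge 0$ the inequality $\Psi^{-1}(t)\le t^{1/p}$ is the best universal pointwise estimate, and it loses exactly the logarithmic factor you need. This is precisely why the paper does \emph{not} apply the $\Psi$-Poincar\'e inequality directly for $\lambda\ge 0$; instead it applies the ordinary $(1,p)$-Poincar\'e inequality and then splits each chain ball $B_j$ into a good part $\{|\nabla u|\le\ell(B_j)^{-1/2}\}$, which contributes a summable error $\sum cs_j^{1/2}$, and a bad part $\{|\nabla u|>\ell(B_j)^{-1/2}\}$, on which the lower bound for $|\nabla u|$ lets one insert the factor $\log^{-\lambda/p}(e+s_j^{-1/2})$ into the $L^p$-estimate and thereby feed the logarithm into the H\"older step. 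Your proposal does not supply this mechanism (or an alternative), so the logarithmic exponent in your capacity estimate is unjustified.

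Two further points. First, for $\lambda<0$ the replacement of $\log(e+A/\rho^2)$ by $\log(1/\rho)$ is legitimate in the \emph{upper}-bound direction only after using a global bound $A\le M:=\int_{B(0,10)}\Psi(|\nabla u|)$, which gives $\log(e+A/\rho^2)\lesssim\log(1/\rho)$; the paper uses exactly this, and you should make that explicit rather than state an ungrounded asymptotic. Second, there is a preliminary normalization issue you skip: one must first deal with the cases $\dashint_{B_0}u\ge 60/81$ and $\dashint_{B'}u\le 61/81$ (via the Poincar\'e and Fubini arguments in the paper) before the two endpoint balls of the chain are known to have averages $1/81$ apart; without this, the left-hand side of the telescope is not bounded below.
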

\begin{proof}
As discussed in our introduction, it suffices to consider functions $u\in W^{1,\Psi}(B(0,2)\setminus E)\cap C^1(B(0,2)\setminus E).$ First note that for all $t\geq 0$
\begin{align}\label{"}
\Psi^{-1}(t)\approx t^\frac{1}{p}/\log^{\frac{\lambda}{p}}(e+t),
\end{align}
where $\Psi^{-1}$ is the generalised inverse of $\Psi.$ Also we have, by the usual covering theorems \cite[p.118]{Zie89}, that
\begin{equation}\label{1}
\lim_{r\rightarrow 0}\frac{1}{r}\int_{B(x,r)}\Psi(\vert\nabla u\vert)\, dx=0
\end{equation}
for $H^1$-a.e. $x\in{B(0,2)}.$\\
$\textbf{Case~I.}~1<p<2.$
Fix $x \in E$ so that the upper and lower limits $u^+(x)$ and $u^-(x)$ exist and $( \ref{1} )$ holds and also the porosity condition holds for $x.$ It is enough to prove that $u^+(x)=u^-(x).$ Let us assume that $u^+(x)\neq u^-(x).$ So, by subtracting a constant, scaling and truncating $u,$ without any loss of generality we may assume that $u=1$ in $A^+=\{(x_1,t):0<t<\epsilon\}$ and $u=0$ in $A^-=\{(x_1,t):-\epsilon<t<0\}.$ Fix $r_i<\epsilon,$ $I_i$ as in the definition of porosity and write $I_i'=\{y\in I_i:u(y)\leq\frac{1}{36}\}$ and $I_i''=I_i\setminus I_i'.$ By symmetry, we may assume that $H^1(I_i')\geq\frac{1}{2}H^1(I_i).$ Fix a ball $B_0$ of radius $s_0=\frac{1}{2}H^1(I_i)$ centred on $I_i$ with $B_0\cap\mathbb{R}\subset I_i$ and another ball $B'$ of radius $\frac{1}{2}r_i$ centred on $A^+$ with $B'\subset B(x,r_i)^+.$ Here $B(x,r_i)^+$ denotes the upper half of the ball $B(x,r_i).$\\
\indent If we have $\dashint_{B_0}u \geq 60/81,$ then we consider a cube $Q_0$ whose sides are parallel to the axes and of side length $2s_0$ and which contains the ball $B_0.$ If we assume that $\dashint_{Q_0}u \leq 2/3,$ then by the Poincar\'e inequality we obtain $\int_{Q_0}\vert\nabla u(x)\vert\, dx \geq cs_0$ for some constant $c,$ which implies that $\int_{B(x,r_i)}{\vert\nabla u(x)\vert}\, dx \geq cr_i.$
Using Jensen's inequality, one obtains $$\dashint_{B(x,r_i)}{\Psi(\vert\nabla u(x)\vert)}\, dx\geq\Psi\left(\dashint_{B(x,r_i)}{\vert\nabla u(x)\vert}\, dx\right)\geq\Psi\left(\frac{c}{r_i}\right)$$ which contradicts with \eqref{1} and concludes the theorem for this particular case. Now, we may assume that $\dashint_{Q_0}u \geq 2/3.$ Then by using the Fubini theorem and the fundamental theorem of calculus, we get $\int_{Q_0}{\vert\nabla u(x)\vert}\, dx \geq s_0/18^2$ and again using Jensen's inequality we get a contradiction with \eqref{1}. Therefore we assume that $\dashint_{B_0}u \leq 60/81.$\\
\indent If we have $\dashint_{B'}u \leq 61/81,$ then again we a consider a cube $Q'$ whose sides are parallel to the axes and of side length $r_i$ and which contains $B'.$ If we assume that $\dashint_{Q'}u \geq 64/81,$ then using the Poincar\'e inequality and Jensen's inequality we get a contradiction with \eqref{1} as above. Otherwise we use the Fubini theorem and the fundamental theorem of calculus and also Jensen's inequality at the end to conclude the theorem. So now we assume that $\dashint_{B'}u \geq 61/81.$\\
\indent We use the telescopic argument for the balls $B'$ and $B_0.$ This means that we consider a finite number of balls $B_0, B_1,\ldots, B_k=B'$ whose centres lie on the line joining the centres of $B'$ and $B_0$ with $\vert B_j\cap B_{j+1}\vert\geq\frac{1}{10}\vert B_j\vert$ and the radii increase geometrically so that they form a portion of a cone. We may assume that no point in $\mathbb{R}^2$ is contained in more than two of these balls. From the construction together with the Poincar\'e inequality and H\"older's inequality we have 
\begin{eqnarray}\label{1'}
\frac{1}{81}\leq\vert u_{B_0}-u_{B'}\vert\leq\sum_{j=0}^{k-1}\vert u_{B_j}-u_{B_{j+1}}\vert &\leq & \sum_{j=0}^{k-1}cs_j\dashint_{B_j}{\vert\nabla u(x)\vert}\, dx\\
& \leq & \sum_{j=0}^{k-1}cs_j\left(\dashint_{B_j}{\vert\nabla u(x)\vert}^p\, dx\right)^{\frac{1}{p}}\nonumber\\
& \leq & \sum_{j=0}^{k-1}cs_j^{1-\frac{2}{p}}\left(\int_{B_j}\vert\nabla u(x)\vert^p\, dx\right)^{\frac{1}{p}}\nonumber,
\end{eqnarray}
where $s_j$ is the radius of the ball $B_j$ for $j=0,1, \ldots, k-1.$\\
\indent First we consider the sub-case $\lambda\geq 0.$ For this case we split the balls $B_j$ into \lq\lq good\rq\rq\ part $B_j^g$ and \lq\lq bad\rq\rq\ part $B_j^b$ where $B_j^g=\lbrace x:\vert\nabla u(x)\vert\leq \text{diam}(B_j)^{-1/2}\rbrace$ and $B_j^b=\lbrace x:\vert\nabla u(x)\vert > \text{diam}(B_j)^{-1/2}\rbrace$ for $j=0,1,\ldots, k-1.$ Using this splitting one obtains
\begin{align}\label{2}
\frac{1}{81} &\leq \sum_{j=0}^{k-1}cs_j^{1/2}+\sum_{j=0}^{k-1}cs_j^{1-\frac{2}{p}}\log^{-\frac{\lambda}{p}}\left(e+s_j^{-1/2}\right)\left(\int_{B_j^b}\vert\nabla u(x)\vert^p\log^{\lambda}\left(e+\vert\nabla u(x)\vert\right)\right)^{\frac{1}{p}}
\\
&\leq cr_i^{1/2}+c\sum_{j=0}^{k-1}\frac{1}{s_j^{\frac{2-p}{p}}\log^{\frac{\lambda}{p}}\left(e+s_j^{-1/2}\right)}\left(\int_{B_j^b}{\Psi(\vert\nabla u(x)\vert)}\, dx\right)^{1/p}\nonumber .
\end{align}
We again use the H\"older's inequality to obtain
\begin{align}\label{10}
\frac{1}{81}-cr_i^{\frac{1}{2}}\leq c\left(\sum_{j=0}^{k-1}\frac{1}{{s_j}^{\frac{2-p}{p-1}}\log^{\frac{\lambda}{p-1}}(e+s_j^{-1/2})}\right)^{1-\frac{1}{p}}\left(\sum_{j=0}^{k-1}\int_{B_j^b}{\Psi(\vert\nabla u(x)\vert)}\, dx\right)^{\frac{1}{p}}.
\end{align}
Since the radii of the balls $B_j$ are in geometric series, one obtains
\begin{align}\label{subcase1}
\int_{B(x,r_i)}{\Psi(\vert\nabla u(x)\vert)}\, dx\geq cs_0^{2-p}\log^{\lambda}\left(\frac{1}{s_0}\right)\left(\frac{1}{6}-cr_i^{1/2}\right)^p.
\end{align}
\indent For the sub-case $\lambda<0,$ we apply Jensen's inequality to the first line of (\ref{1'}) and use (\ref{"}) to get
\begin{eqnarray*}
\frac{1}{81} & \leq & \sum_{j=0}^{k-1}cs_j\Psi^{-1}\left(\dashint_{B_j}{\Psi(\vert\nabla u(x)\vert})\, dx\right)\\
& \leq & \sum_{j=0}^{k-1}\frac{cs_j\left(\dashint_{B_j}{\Psi(\vert\nabla u(x)\vert)}\, dx\right)^{1/p}}{\log^\frac{\lambda}{p}\left(e+\dashint_{B_j}{\Psi(\vert\nabla u(x)\vert)}\, dx \right)}.
\end{eqnarray*}
Let us consider the bigger ball $B=B(0,10)$ containing all the balls $B_j,$ $j=0,1,\ldots, k-1.$ Now $\int_{B_j}\Psi(\vert\nabla u\vert)\, dx \leq \int_{B\setminus E}\Psi(\vert\nabla u\vert)\, dx\leq M$ for $j=0,1,\ldots, k,$ where $M$ is a constant independent of $x$ and $r_i.$ Apply this estimate and the H\"older's inequality to the above inequality to obtain
\begin{eqnarray}\label{11}
\frac{1}{81} \leq c \left(\sum_{j=0}^{k-1}\frac{1}{{s_j}^{\frac{2-p}{p-1}}\log^{\frac{\lambda}{p-1}}(e+Ms_j^{-2})}\right)^{1-\frac{1}{p}} \left(\sum_{j=0}^{k-1}\int_{B_j}{\Psi(\vert\nabla u(x)\vert)}\, dx\right)^{1/p}
\end{eqnarray}
Consequently,
\begin{align}\label{subcase2}
\int_{B(x,r_i)}{\Psi(\vert\nabla u(x)\vert)}\, dx\geq cs_0^{2-p}\log^{\lambda}\left(\frac{1}{s_0}\right).
\end{align}
\indent Taking \eqref{subcase1} into account we conclude that \eqref{subcase2} holds both for $\lambda\geq 0$ and for $\lambda<0.$ Recalling that $s_0=\frac{1}{2}H^1(I_i)$ and using the porosity condition we get a contradiction with \eqref{1}.\\
$\textbf{Case~II.}~p=1,\lambda\in\mathbb{R}.$ If $\lambda\leq 0,$ then $E$ necessarily has vanishing length and removability is clear. For $\lambda>0,$ we proceed similarly like in the previous case to obtain from $(\ref{2})$
 $$\frac{1}{81}-cr_i^{1/2}\leq \sum_{j=0}^{k-1}\frac{c\left(\int_{B_j^b}{\Psi(\vert\nabla u(x)\vert)}\, dx\right)}{s_j\log^{\lambda}\left(e+{s_j}^{-\frac{1}{2}}\right)}.$$ Hence one gets the desired estimate as
 \begin{align*}
 \int_{B(x, r_i)} \Psi(\vert\nabla u(x)\vert)\geq cs_0\log^{\lambda}\left(\frac{1}{s_0}\right)
 \end{align*}
 and obtains the desired conclusion similarly as in Case I.\\
 $\textbf{Case~III.}~p=2, \lambda\leq 1.$ For $0<\lambda\leq 1,$ from the inequality (\ref{10}), we have the estimate
  \begin{align*}
\frac{1}{81}-cr_i^{1/2}\leq c\left(\sum_{j=0}^{k-1}\frac{1}{\log^{\lambda}(e+{s_j}^{-1/2})}\right)^{\frac{1}{2}}\left(\sum_{j=0}^{k-1}\int_{B_j^b}{\Psi(\vert\nabla u(x)\vert)}\, dx\right)^{\frac{1}{2}},
\end{align*}
and for $\lambda<0,$ from the inequality (\ref{11}), we have the estimate
\begin{align*}
\frac{1}{81} \leq c \left(\sum_{j=0}^{k-1}\frac{1}{\log^{\lambda}(e+Ms_j^{-2})}\right)^{1/2} \left(\sum_{j=0}^{k-1}\int_{B_j}{\Psi(\vert\nabla u(x)\vert)}\, dx\right)^{1/2}.
\end{align*}
Hence we have
\begin{align*}
\int_{B(x,r_i)}{\Psi(\vert\nabla u(x)\vert)}\, dx\geq c\log^{\lambda-1}\left(\frac{1}{s_0}\right)
\end{align*}
for $\lambda<1$ and
\begin{align*}
\int_{B(x,r_i)}{\Psi(\vert\nabla u(x)\vert)}\, dx\geq \frac{c}{\log\log\left(\frac{1}{s_0}\right)}
\end{align*}
for $\lambda=1$ and conclude similarly as in Case I to finish the proof.
\end{proof}
The next theorem shows that $E$ cannot be removable if the complementary intervals are small. This result will help us to prove the sharpness in Theorem A. For an interval $I=(a,b)$ and a positive real number $c,$ we write $cI$ to denote the interval $(\frac{a+b}{2}-\frac{c(b-a)}{2},\frac{a+b}{2}+\frac{c(b-a)}{2}).$ For a rectangle $W,$ we define $cW$ in a similar way. 
\begin{theorem}\label{4}
Let $E\subset (0,1)$ be compact with $(0,1)\setminus E=\bigcup_{j=1}^{\infty} I_j,$ where $I_j$ are pairwise disjoint open intervals. Suppose that\\
$(i)~H^1(E)>0 \quad\text{and}\quad \sum_{j=1}^\infty H^1(I_j)^{2-p}\log^{\lambda}(1/{H^1(I_j)})<\infty,$ when $1<p<2,~\lambda \in \mathbb{R}$ or $p=1,~\lambda\geq 0;$\\
$(ii)~H^1\left((0, 1)\setminus \bigcup_{i=1}^{\infty} H^1(I_j)^{-1/2}I_j\right)>0 \quad\text{and}\quad \sum_{j=1}^\infty \log^{\lambda-1}(1/{H^1(I_j)})<\infty,$ when $p=2,~\lambda<1;$\\
$(iii)~H^1\left((0, 1)\setminus \bigcup_{i=1}^{\infty} \frac{R_j}{H^1(I_j)}I_j\right)>0 \quad\text{and}\quad \sum_{j=1}^\infty (\log\log(1/{H^1(I_j)}))^{-1}<\infty,$ when $p=2,~\lambda=1.$ Here $R_j=\exp(-\log(1/H^1(I_j))^{1/2}).$\\
Then $E$ is not $(p, \lambda)$-removable.
\end{theorem}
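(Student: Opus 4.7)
I would prove the theorem by exhibiting, under each of the hypotheses~(i)--(iii), an explicit function $u\in C^{1}(\mathbb{R}^{2}\setminus E)\cap W^{1,\Psi}(\mathbb{R}^{2}\setminus E)$ whose one-sided limits $u^{+}$ and $u^{-}$ on the line $\{y=0\}$ differ on a set of positive $H^{1}$-measure in $E$; by the characterisation of $(p,\lambda)$-removability recorded in the second paragraph of Section~3, this is exactly what is needed to conclude non-removability.

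The function $u$ would equal $1$ in the upper half-plane and $0$ in the lower half-plane, modified only inside (essentially) pairwise disjoint ``tent'' regions $T_{j}^{\pm}$ placed above and below each complementary interval $I_{j}$, where $u$ smoothly interpolates between $0$ and $1$ and matches the surrounding constants on $\partial T_{j}^{\pm}$. Set $s_{j}=H^{1}(I_{j})$ and let $c_{j}$ denote the midpoint of $I_{j}$. In case~(i) ($1\le p<2$), $T_{j}^{\pm}$ would be a rhombic tent with two vertices at the endpoints of $I_{j}$ and one at $(c_{j},\pm s_{j})$, using a linear interpolation between $0$ at the bottom vertex and $1$ at the top vertex, with a mild mollification near the boundary to ensure $C^{1}$ smoothness off $E$. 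In cases~(ii) and~(iii) ($p=2$) a linear profile is not admissible: the base of $T_{j}^{\pm}$ must be widened to $s_{j}^{-1/2}I_{j}$, respectively $(R_{j}/s_{j})I_{j}$, and inside the tent one uses a radial profile $u=u(\rho)$ in polar coordinates $\rho$ centred at $c_{j}$, of logarithmic type for~(ii) and of $\log\log$ type for~(iii).

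The energy bound then decomposes as $\int_{\mathbb{R}^{2}\setminus E}\Psi(|\nabla u|)\,dx=\sum_{j}\int_{T_{j}^{+}\cup T_{j}^{-}}\Psi(|\nabla u|)\,dx$. In case~(i), $|\nabla u|\approx s_{j}^{-1}$ on a set of area $\approx s_{j}^{2}$, so the $j$-th summand is $\approx s_{j}^{2}\,\Psi(s_{j}^{-1})\approx s_{j}^{2-p}\log^{\lambda}(1/s_{j})$, which is summable by hypothesis~(i). In case~(ii), the logarithmic profile has $|\nabla u|\approx 1/(\rho\log(1/s_{j}))$ and integration in polar coordinates from $\rho=s_{j}$ to $\rho=s_{j}^{1/2}$ yields a $j$-th summand $\approx\log^{\lambda-1}(1/s_{j})$. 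In case~(iii), the $\log\log$ profile with inner radius $s_{j}$ and outer radius $R_{j}$ gives the $j$-th summand $\approx(\log\log(1/s_{j}))^{-1}$. Each sum converges by the corresponding hypothesis.

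Finally, the set on which $u^{+}\neq u^{-}$ is identified as follows. In case~(i), the projections of all tents to the $x_{1}$-axis lie in $\bigcup_{j}I_{j}$, so for every $x_{1}\in E$ the vertical line $\{x_{1}\}\times\mathbb{R}$ avoids every tent and $u$ simply jumps from $0$ to $1$ across $(x_{1},0)$; the discontinuity set is all of $E$, which has positive length by the first clause of hypothesis~(i). In cases~(ii) and~(iii) the same argument applies to $x_{1}\in E\setminus\bigcup_{j}s_{j}^{-1/2}I_{j}$, respectively $x_{1}\in E\setminus\bigcup_{j}(R_{j}/s_{j})I_{j}$, whose positivity is the first clause of the corresponding hypothesis. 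The main obstacle I foresee is calibrating the logarithmic and $\log\log$ profiles in cases~(ii), (iii) precisely enough that the tent energy matches the hypothesis summand up to a bounded multiplicative constant, in the spirit of the $p=2$ capacity estimate for a segment; a subsidiary technical issue is that the widened tents in (ii), (iii) may overlap, which is harmless since the discontinuity is only required on the complementary set.
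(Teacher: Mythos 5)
Your overall strategy — exhibiting an explicit function in $W^{1,\Psi}(\Omega\setminus E)\cap C^{1}(\Omega\setminus E)$ whose one-sided traces disagree on a positive-length subset of $E$, with energy controlled by the sums in (i)--(iii) — is precisely what the paper does, and your energy computations (the $j$-th summand $\approx s_j^{2-p}\log^\lambda(1/s_j)$, $\approx\log^{\lambda-1}(1/s_j)$, $\approx(\log\log(1/s_j))^{-1}$) match the paper's. The issues are in how you actually build the function.

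For (i) the paper takes $u(x)=\min\{x_2/d(x,E),1/\sqrt 2\}$ above and $u\equiv 0$ below; this is automatically continuous across each $I_j$ and bounded near infinity, and its gradient is $\approx 1/d(x,E)$ inside the triangle over $I_j$. Your ``linear interpolation between $0$ at the bottom vertex and $1$ at the top vertex'' cannot simultaneously match the constants $1$ on the two upper edges of the rhombus and $0$ on the two lower edges, so as written your $u$ is discontinuous off $E$. The repair is to use an angular rather than linear profile near each endpoint of $I_j$ (which is effectively what the paper's formula does), and one then has $|\nabla u|\approx 1/r$ near the endpoints rather than $\approx s_j^{-1}$ throughout; the resulting integral $\int_0^{s_j}r^{1-p}\log^\lambda(1/r)\,dr$ is still $\approx s_j^{2-p}\log^\lambda(1/s_j)$ for $p<2$, so your estimate survives, but the stated justification (``$|\nabla u|\approx s_j^{-1}$ on a set of area $\approx s_j^2$'') glosses over exactly the point that makes $p=2$ different.

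For (ii)--(iii) the proposal has a genuine gap. A function that is literally radial, $u=u(\rho)$ with $\rho=|x-c_j|$, takes a single value on each circle $\{\rho=\mathrm{const}\}$, so it cannot equal $1$ on the upper outer boundary of the tent and $0$ on the lower outer boundary. There is no ``calibration'' of a radial profile that makes this consistent. The paper avoids the problem entirely by not writing down $u$ directly: it defines a radial \emph{density} $f_j(x)=\bigl(|x-x_j|\log(1/s_j)\bigr)^{-1}$ on the box-annulus $W_j\setminus s_j^{1/2}W_j$, sets $g=\max_j f_j$, and takes $u(x)=\inf_{\gamma_x}\int_{\gamma_x}g\,dH^1$ over rectifiable curves in the slit domain. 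This $u$ is automatically locally Lipschitz and continuous on $\Omega\setminus E$ (so overlap of the $W_j$ is genuinely harmless), satisfies $|\nabla u|\le g$, and the lower bound $u\ge 1/2$ in the upper half follows from the radial integral of $g$ across each annulus. Your radial calculation for the energy is exactly the estimate of $\int\Psi(g)$, so the analytic part is fine — what is missing is the passage from a radial gradient bound to an actual admissible function. Your dismissal of the overlap issue is also too quick: in a direct tent-patching the overlap can break continuity of $u$ on $\bigcup_j I_j$, which \emph{is} required for $u\in W^{1,\Psi}(\Omega\setminus E)$; the inf-path-integral (or a $\max$ of one-sided profiles as the paper uses in its higher-dimensional Theorem~\ref{nonremovable}(ii)) is what makes it harmless.
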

Notice that $(i),$ for $p=1, \lambda=0,$ shows that there are no $(1,0)$-removable compact sets $E\subset (0,1)$ of positive length.\\
\indent The idea to prove this theorem is to construct a function $u\in W^{1,\Psi}(\Omega\setminus E)$ for which the two sided limits do not coincide in a subset of $E$ of positive $H^1$-measure. 
\begin{proof}
(i) Let $\Omega=B(\frac{1}{2},\frac{1}{2}).$ We define a function $u$ in $\Omega\setminus E$ as follows:
\begin{equation*}
u(x)=
 \begin{cases}
  \text{min}\lbrace \frac{x_2}{d(x, E)}, \frac{1}{\surd 2}\rbrace & \text{if $x_2\geq0$},\\
  0& \text{if $x_2<0$},
 \end{cases}
\end{equation*}
where $x_2$ is the second coordinate of $x.$ Then $u$ is locally Lipschitz and $\vert\nabla u\vert\leq M<\infty$ almost everywhere in $\Omega\setminus\bigcup_{j=1}^\infty \Delta_j,$ where $\Delta_j$ is an isosceles right angle triangle in the upper half plane with hypotenuse $I_j.$ We also have that $\vert\nabla u(x)\vert$ is comparable with $1/d(x, E)$ when $x\in\Delta_j.$ Hence, using the fact that $E$ lies outside $\Delta_j$ for all $j$ and also using polar coordinates, we have
\begin{align*}
\int_{\Delta_j}\Psi(\vert\nabla u\vert)\, dx\leq CH^1(I_j)^{2-p}\log^{\lambda}\left(\frac{1}{H^1(I_j)}\right).
\end{align*}
Then, by the assumption of the theorem, we conclude that $u\in W^{1, \Psi}(B(0,2)\setminus E).$ But when $x=(x_1,0)\in E$ we see that $u^+(x)=1/\surd 2$ whereas $u^-(x)=0.$ It is easy to check that $u$ cannot be extended to a function in $W^{1,\Psi}(\Omega).$\\
\indent(ii) Set $\Omega=(0,1)\times (-1, 1).$ Then for every $I_j$ from our collection, we define
\begin{align*}
W_j=(H^1(I_j)^{-\frac{1}{2}}I_j)\times(-H^1(I_j)^{\frac{1}{2}}, H^1(I_j)^{\frac{1}{2}}).
\end{align*}
Given $j,$ we define, for $x\in\Omega\setminus E,$
\begin{align*}
f_j(x)=\left(\vert x-x_j \vert\log\left(1/H^1(I_j)\right)\right)^{-1}\chi_{W_j\setminus H^1(I_j)^{\frac{1}{2}}W_j}(x)
\end{align*}
and $g(x)=\max_j f_j(x),$ where $x_j$ is the centre point of $I_j.$ Then $g$ is locally bounded in $\Omega\setminus E.$ Set $y=(\frac{1}{2},\ldots,\frac{1}{2}, -1)$ and for every $x\in\Omega\setminus E$ define
\begin{equation*}
u(x)=\inf_{\gamma_x} \int_{\gamma_x}g(x)\, dH^1,
\end{equation*}
where the infimum is taken over all rectifiable curves joining $x$ and $y$ in $((0, 1)\times[-1, 1])\setminus E.$ Then $u$ is locally Lipschitz in $\Omega\setminus E$ and we get
\begin{eqnarray*}
\int_{\Omega\setminus E}\Psi(\vert\nabla u\vert) &\leq& C\sum_{j=1}^{\infty}\log^{-2}\left(\frac{1}{H^1(I_j)}\right)\int_{W_j\setminus H^1(I_j)^{\frac{1}{2}}W_j}\frac{dx}{\vert x-x_j \vert^2}\log^{\lambda}\left(e+\frac{1}{\vert x-x_j \vert}\right)\\
&\leq & C\sum_{j=1}^{\infty}\log^{\lambda-1}\left(\frac{1}{H^1(I_j)}\right)<\infty,
\end{eqnarray*}
and consequently $u\in W^{1, \Psi}(\Omega\setminus E).$ But $u\geq 1/2$ in the upper half of $\Omega$ from the construction whereas $\lim_{t\rightarrow 0-}u(x', t)=0$ for all $x'\in(0, 1)\setminus \bigcup_{i=1}^{\infty} H^1(I_j)^{-1/2}I_j,$ which has positive measure by the assumption. Hence $E$ is not removable for $u.$\\
\indent (iii) This case is very similar to the previous case. Here we take the functions 
\begin{align*}
f_j(x)=\left(\log\log\left(\frac{1}{H^1(I_j)}\right)\vert x-x_j \vert\log\left(\frac{1}{\vert x-x_j \vert}\right)\right)^{-1}\chi_{W_j\setminus \frac{H^1(I_j)}{R_j}W_j}(x),
\end{align*}
where
$$W_j=\left(\frac{R_j}{H^1(I_j)}I_j\right)\times(-R_j, R_j).$$
Then we get 
\begin{eqnarray*}
\int_{\Omega\setminus E}\Psi(\vert\nabla u\vert) &\leq & C\sum_{j=1}^{\infty}\left(\log\log\left(\frac{1}{H^1(I_j)}\right)\right)^{-2}\int_{W_j\setminus \frac{H^1(I_j)}{R_j}W_j}\frac{dx}{\vert x-x_j \vert^2 \log\left(\frac{1}{\vert x-x_j \vert}\right)}\\
&\leq & C\sum_{j=1}^{\infty}\left(\log\log\left(\frac{1}{H^1(I_j)}\right)\right)^{-2}\left(\log\log\left(\frac{1}{H^1(I_j)}\right)-\log\log\left(\frac{1}{R_j}\right)\right)\\
&= & C\sum_{j=1}^{\infty}\left(\log\log\left(\frac{1}{H^1(I_j)}\right)\right)^{-1}<\infty.
\end{eqnarray*}
\end{proof}
\begin{proof}[\textbf{Proof ~of ~Theorem ~A ~for ~n=2}] Let $1<p<2, \lambda\in\mathbb{R}$ or $p=1, \lambda>0.$ By Theorem \ref{0} and Theorem \ref{4} it suffices to construct a $(p, \lambda)$-porous Cantor set $E\subset \left[0, 1\right]$ of positive length and with $\sum_{j=1}^\infty  H^1(I_j)^{2-p}\log^{\lambda-\epsilon}\left(e+1/H^1(I_j)\right)<\infty$ for every $\epsilon>0,$ where $I_j$ are the complementary intervals of $E$ on $\left[0, 1\right].$\\
\indent  We modify the example constructed by Koskela in \cite{Kos99}. The set $E$ is obtained by the following Cantor construction. Let $0<s<\frac{1}{3}$ be a small constant to be determined momentarily. We begin by deleting an open interval of length $s2^{-\frac{2}{2-p}}$ from the middle of $\left[0, 1\right].$ We are then left with two closed intervals. We continue the process as follows: if we are left with $2^{i-1}$ closed intervals, we remove from the middle of each of those intervals an open interval of length $s2^{-\frac{2i}{2-p}}/i^{\frac{\lambda}{2-p}},$ provided $i\in M=\mathbb{N}\setminus\{2^j:j\in\mathbb{N}\},$ and if we are left with $2^{2^j-1}$ closed intervals, we remove an open interval of length $s2^{-\frac{2^j}{2-p}}/2^{\frac{j\lambda}{2-p}}.$ By induction we obtain a nested sequence of closed intervals. We define $E$ as the intersection of all these closed intervals. The total length of the removed intervals is
\begin{align*}
\sum_{i\in M} 2^{i-1}\frac{s2^{-\frac{2i}{2-p}}}{i^{\frac{\lambda}{2-p}}}+\sum_{j\in\mathbb{N}} 2^{2^j-1}\frac{s2^{-\frac{2^j}{2-p}}}{2^{\frac{j\lambda}{2-p}}}<\infty.
\end{align*}
This sum can be made strictly less than $1$ by choosing $s$ sufficiently small and so $E$ has positive length. We have constructed the set $E$ in such a way that for any $x\in E$ and $j\geq 1,$ we get a complementary interval $J_j$ of length $s2^{-\frac{2^j}{2-p}}/2^{\frac{{j\lambda}}{2-p}}$ and with $d(x, J_j)\leq 2^{-2^j}.$ Hence $(p, \lambda)$-porosity of $E$ follows. Finally, to see that $E$ is not $(p, \lambda-\epsilon)$-removable, we have to check the convergence of the sum $\sum_{j=1}^\infty H^1(I_j)^{2-p}\log^{\lambda-\epsilon}(1/{H^1(I_j)}$ for $\epsilon>0,$ which in this case turns out to be
\begin{align}\label{9}
\begin{split}
\sum_{j=1}^\infty H^1(I_j)^{2-p}\log^{\lambda-\epsilon}(1/{H^1(I_j)} &=
\sum_{i\in M} 2^i\left(\frac{s2^{-\frac{2i}{2-p}}}{i^{\frac{\lambda}{2-p}}}\right)^{2-p}\log^{\lambda-\epsilon}\left(e+\frac{2^{\frac{2i}{2-p}}}{si^{-\frac{\lambda}{2-p}}}\right)\\
&\qquad +\sum_{j\in\mathbb{N}}2^{2^j}\left(\frac{s2^{-\frac{2^j}{2-p}}}{2^{\frac{j\lambda}{2-p}}}\right)^{2-p}\log^{\lambda-\epsilon}\left(e+\frac{2^{\frac{2^j}{2-p}}}{s2^{-\frac{j\lambda}{2-p}}}\right)\\
&\leq C\sum_{i\in M}\frac{2^{-i}}{i^{\epsilon}}+\sum_{j\in\mathbb{N}}\frac{1}{2^{j\epsilon}}
\end{split}
\end{align}
and hence the sum is finite for every $\epsilon>0$ (note that the sum does not converge when $\epsilon$ is zero).\\
\indent Let $p=2,~\lambda<1.$ We remove open intervals of length $s2^{-i}\exp(-2^{\frac{i}{1-\lambda}})$ when we are left with $2^{i-1}$ closed intervals for $i\in\mathbb{N}$ and then it is easy to verify the porosity condition and also the convergence of the series $\sum_{j=1}^\infty \log^{\lambda-1-\epsilon}(1/{H^1(I_j)})$ for every $\epsilon>0.$\\
\indent Now let $p=2,~\lambda=1.$ Here we remove open intervals of length $s2^{-i}\exp(-\exp(2^{i}))$ when we are left with $2^{i-1}$ closed intervals for  $i\in\mathbb{N}.$ Then one has to check that $\sum_{j=1}^\infty (\log\log(1/{H^1(I_j)}))^{-1}=\infty$ but $\sum_{j=1}^\infty \log^{-\epsilon}(1/{H^1(I_j)})<\infty$ for every $\epsilon>0,$ which is easy to do. This completes the proof of the main theorem in the plane case.
\end{proof}

\section{The higher dimensional case}
Similarly to the case $n=2,$ we would like to consider the one sided limits and to show that they coincide. But since line segments have $p$-capacity zero for $p\leq n-1,$ one can not use the same argument as in the plane case. In \cite{Kos99}, the author has used $p$-harmonic functions to overcome this problem. We do not know how to use $\Psi$-harmonic functions in our setting. Instead of this we extend the restriction of our function to the upper (or lower) half space by reflection to the entire space and take a quasicontinuous representative of this $W^{1,1}_{\loc}$-Sobolev function to reduce the problem to the following. If a function $u\in W^{1,\Psi}$ is such that $u_{B'}\geq 61/81$ and $u\leq 1/36$ on half of $A$ (see Figure \ref{fig2}), then using a chaining argument and Poincar\'e inequality we get a lower bound
\begin{equation*}
\int_{B(x,r)}\Psi(\vert\nabla u\vert)\geq cs^{n-p}\log^{\lambda}\left(\frac{1}{s}\right),
\end{equation*}
for $1\leq p<n-1,$ where $s=\diam(A)$ and similar estimates hold for different pairs of $(p,\lambda).$
\begin{figure}[h!]
\begin{center}
\begin{tikzpicture}[scale=2]
\draw (0,0) circle (1);
\draw (1.005,1.005) node {$B(x,r)$};
\draw [dashed] (0.55,0) circle (0.15);
\draw (0,0.5) circle (0.3);
\draw [dashed] (0,0) -- (0,1);
\draw (-0.40,0.6) node {$B'$};
\draw (0.55,0.25) node {$A$};
\draw (-0.52,0.18) node[scale=0.7] {$u_{B'}\geq 61/81$};
\draw (0.55,-0.25) node[scale=0.7] {$u\leq 1/36$};
\end{tikzpicture}
\end{center}
\caption{}
\label{fig2}
\end{figure}
But on the other hand, we know that $\int_{B(x,r)}\Psi(\vert\nabla u\vert)=o(r^{n-1})$ for $H^{n-1}$-a.e. $x.$ Then again the definition of the porosity comes in a natural way. Before defining the porosity condition, we prove a lemma which allows us to consider even a continuum rather than a ball in the definition of porosity for some cases.
\begin{lemma}\label{3'}
Let $0<r<1.$ Denote by $B(0,r)^+$ the upper half of the $n$-dimensional ball $B(0,r)$ of radius r. Let $F\subset B(0,r)\cap\mathbb{R}^{n-1}$ be compact with $H^1(F)\geq r/3.$ Let $\mathcal{W}$ be a Whitney decomposition of $B(0,r)^+.$ Suppose $u\in C^1(B(0,r)^+)\cap C(B(0,r)^+\cup F)$ satisfies $u=0$ on $F$ and $\dashint_{Q_1} u \geq \frac{1}{2},$ where $Q_1\in\mathcal{W}$ is a largest cube contained in $B(0,r)^+.$ Then 
\begin{equation*}
\int_{B(0,r)^+}\vert\nabla u\vert ^p\log^{\lambda}\left(e+\vert\nabla u\vert\right)\geq
\begin{cases}
Cr^{n-p}\log^{\lambda}\left(\frac{1}{r}\right)& \text{when}~ n-1<p\leq n, ~\lambda\in \mathbb{R},\\
Cr\log^{\lambda-(n-2)}\left(\frac{1}{r}\right)& \text{when}~ p=n-1, ~\lambda >n-2.

\end{cases}
\end{equation*}
\end{lemma}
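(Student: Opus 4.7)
My approach mirrors the chaining/telescoping strategy of Case~I of Theorem~\ref{0}, now integrated over $F$ to harness the one-dimensional mass bound $H^1(F)\ge r/3$ rather than a single pair of balls.

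For each $x\in F$ I would build a chain of Whitney cubes $Q_1=Q_0^x,Q_1^x,Q_2^x,\dots\in\mathcal{W}$ with consecutive cubes adjacent, sidelengths $s_j\asymp 2^{-j}r$, and $Q_j^x$ shrinking to $x$ as $j\to\infty$. Since $u\in C(B(0,r)^+\cup F)$ and $u(x)=0$, telescoping and the $L^1$-Poincar\'e inequality on each cube give
\begin{equation*}
\tfrac{1}{2}\le\bigl\vert u_{Q_0^x}-u(x)\bigr\vert\le C\sum_{j\ge 0}s_j\dashint_{Q_j^x}\vert\nabla u\vert\,dy.
\end{equation*}
As in \eqref{2}, I would split each $Q_j^x$ into the good part $\{\vert\nabla u\vert\le s_j^{-1/2}\}$ and the bad part, bound the good contribution by $Cr^{1/2}$, and on the bad part apply Jensen's inequality together with the asymptotic $\Psi^{-1}(t)\asymp t^{1/p}/\log^{\lambda/p}(e+t)$. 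A H\"older inequality in the $j$-index then produces a chain estimate analogous to \eqref{10}.

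I would then raise this pointwise-in-$x$ bound to the $p$-th power and integrate $dH^1(x)$ over $F$; by Fubini, the right-hand side converts into a weighted sum over Whitney cubes $Q$ with weights $w_Q=H^1(\{x\in F:Q\in\text{chain}(x)\})$. Since a cube $Q$ of sidelength $s_Q$ can be used only by chains ending in the vertical projection $F\cap\pi(Q^*)$ of a fixed dilate, we have $w_Q\le Cs_Q$ provided $F$ has first been restricted to a subset with bounded upper $1$-density. Grouping cubes by scale $s_j\asymp 2^{-j}r$ (with $\asymp(r/s_j)^{n-1}$ relevant cubes at each scale), the chain series collapses to a geometric series in $s_j$. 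For $n-1<p\le n$ the associated exponent of $s_j$ is strictly positive, the coarse scale $j=0$ dominates, and the bound $Cr^{n-p}\log^\lambda(1/r)$ emerges. For the borderline case $p=n-1$ the exponent vanishes, the series degenerates into a $\log(1/r)$ factor, and combining with the $\log^{-\lambda/(n-2)}$ factor inherited from $\Psi^{-1}$ yields $\log^{\lambda-(n-2)}(1/r)$, which is nonnegative precisely when $\lambda>n-2$.

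The main obstacle is the measure-theoretic step: a uniform bound $H^1(F\cap\pi(Q^*))\le Cs_Q$ does not follow immediately from $H^1(F)\ge r/3$, and is likely to require a preliminary reduction of $F$ to a subset of bounded upper $1$-dimensional density via a Vitali-type selection. The borderline case $p=n-1$ is also delicate, since all the geometric series balance only at the logarithmic level, and this is precisely what dictates the threshold $\lambda>n-2$ stated in the lemma.
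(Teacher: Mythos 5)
Your proposal follows the same chaining strategy as the paper: for each $x\in F$ you run a Whitney-cube chain from a fixed coarse cube $Q_1$ down to $x$, apply the $L^1$-Poincar\'e inequality and the good/bad splitting from \eqref{2} along the chain, H\"older across scales, and then integrate over $F$ to convert the pointwise estimate into a weighted sum over cubes. The structure and the final bookkeeping (scale $s_j\asymp 2^{-j}r$ dominating at $j=0$ when $n-1<p\le n$, and the degenerate geometric series producing the extra $\log$ power when $p=n-1$, which is where $\lambda>n-2$ is needed) are exactly the paper's.

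The one place where you diverge is precisely the step you flag as the ``main obstacle'': controlling the weight $w_Q=H^1(\{x\in F:\ Q\in\mathrm{chain}(x)\})$ by $C\,\ell(Q)$, i.e.\ the shadow bound. You propose a Vitali-type density reduction of $F$ to a subset of bounded upper $1$-density. The paper instead invokes Frostman's lemma (cf.\ \cite{Mat95}, p.~112): since $H^1(F')>0$ after rescaling, there is a Radon measure $\mu$ supported on $F'$ with $\mu(B(x,\rho))\le\rho$ for all $x,\rho$ and $\mu(F')\gtrsim H^1_\infty(F')\gtrsim 1$. One then integrates the chain inequality $d\mu(x)$ rather than $dH^1(x)$; the shadow $S(Q)$ of a generation-$j$ cube sits inside a ball of radius $\asymp 2^{-j}$, so $\mu(S(Q))\le C2^{-j}$ comes for free, and $\sum_{Q\in\mathcal W_j}\mu(S(Q))\le C\mu(F')$ by bounded overlap. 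This is cleaner than a density reduction for two reasons: it yields the growth bound uniformly at \emph{all} scales with no further selection, and it sidesteps the technical issue that for $n\ge 3$ the set $F\subset\mathbb R^{n-1}$ may well have $H^1(F)=\infty$, in which case the classical upper-density theorem for $H^1$ is not directly applicable and the Vitali route requires a preliminary reduction to a subset of finite measure. So: your plan is correct in outline and would work once the measure-theoretic step is filled in, but Frostman's lemma is the tool that closes the gap directly, and replacing your proposed Vitali step by the Frostman measure would bring your argument in line with the paper's.
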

\begin{proof}
First note that a change of variables $y=x/r$ gives the estimate
$$\int_{B(0,r)^+}\vert\nabla u(x)\vert^p\log^{\lambda}\left(e+\vert\nabla u(x)\vert\right)\, dx=r^{n-p}\int_{B(0,1)^+}\vert\nabla v(y)\vert^p\log^{\lambda}\left(e+\frac{\vert\nabla v(y)\vert}{r}\right)\, dy,$$
where $v(y)=u(ry)$ satisfies $v\in C^1(B(0,1)^+)\cap C(B(0,1)^+\cup F'),$ $v=0$ on $F'.$ Here $F'\subset B(0,1)\cap\mathbb{R}^{n-1}$ is the transformed compact set with $H^1(F')\geq 1/3.$ Denote $\mathcal{W}'$ the collection of cubes from $\mathcal{W}$ after rescaling. The function $v$ also satisfies $\dashint_{Q'} v \geq \frac{1}{2},$ where $Q'\subset B(0,1)^+$ is the corresponding transformed cube from the collection $\mathcal{W}_1.$\\
\indent Since $H^1(F')>0,$ Frostman's lemma (p.112 of \cite{Mat95}) implies that there exists a Radon measure $\mu$ supported in $F'$ so that $\mu(B(x,r))\leq r$ for all $x\in\mathbb{R}^n$ and all $r>0$ and that $\mu(F')\geq cH^1(F')\geq c/3,$ where $c$ is a positive constant depending only on $n.$\\
\indent For $x\in F',$ denote by $I_x$ the line segment joining $x$ to the centre of $Q'$ and let $\mathcal{Q}(x)$ consist of all the cubes $Q\in\mathcal{W}'$ such that $I_x$ intersects the cube $Q.$ Now we use the Poincar\'e inequality for the chain of cubes to obtain
\begin{align*}
\frac{1}{2}\leq \vert v(x)-v_{Q_1}\vert\leq C\sum_{Q\in\mathcal{Q}(x)} \ell(Q)\left(\dashint_{Q}\vert\nabla v\vert ^p\right)^{\frac{1}{p}},
\end{align*}
where $\ell(Q)$ denotes the edge length of $Q.$ We split the cubes $Q\in\mathcal{Q}(x)$ into \lq\lq good\rq\rq\ part $Q^g$ and \lq\lq bad\rq\rq\ part $Q^b$ where $Q^g=\lbrace x:\vert\nabla v(x)\vert\leq \ell(Q)^{-1/2}\rbrace$ and $Q^b=\lbrace x:\vert\nabla v(x)\vert>\ell(Q)^{-1/2}\rbrace.$ Using this splitting similarly to the inequality (\ref{2}), we rewrite the above inequality as
\begin{align*}
1\leq C\sum_{Q\in\mathcal{Q}(x)} \frac{\ell(Q)^{1-\frac{n}{p}}}{\log^{\frac{\lambda}{p}}\left(e+\frac{\ell(Q)^{-\frac{1}{2}}}{r}\right)}\left(\int_{Q}\vert\nabla v\vert ^p\log^{\lambda}\left(e+\frac{\vert\nabla v\vert}{r}\right)\right)^{\frac{1}{p}}
\end{align*}
for $\lambda>0.$ (For $\lambda<0$ we use Jensen's inequality similarly to the proof of Theorem \ref{0} to get the above inequality.)
By integrating with respect to $\mu$ and using the Fubini theorem and H\"older's inequality we get
\begin{eqnarray*}
\mu(F') & \leq & C\int_{F'} \sum_{Q\in\mathcal{Q}(x)} \frac{\ell(Q)^{1-\frac{n}{p}}}{\log^{\frac{\lambda}{p}}\left(e+\frac{\ell(Q)^{-\frac{1}{2}}}{r}\right)}\left(\int_{Q}\vert\nabla v\vert ^p\log^{\lambda}\left(e+\frac{\vert\nabla v\vert}{r}\right)\right)^{\frac{1}{p}}\, d\mu(x)\\
& \leq & C\sum_{Q\in\mathcal{W}'} \frac{\ell(Q)^{1-\frac{n}{p}}}{\log^{\frac{\lambda}{p}}\left(e+\frac{\ell(Q)^{-\frac{1}{2}}}{r}\right)}\left(\int_{Q}\vert\nabla v\vert ^p\log^{\lambda}\left(e+\frac{\vert\nabla v\vert}{r}\right)\right)^{\frac{1}{p}} \mu(S(Q))\\
& \leq & C\left(\sum_{Q\in\mathcal{W}'}\int_{Q}\vert\nabla v\vert ^p\log^{\lambda}\left(e+\frac{\vert\nabla v\vert}{r}\right)\right)^{\frac{1}{p}} \left(\sum_{Q\in\mathcal{W}'}\frac{\ell(Q)^{\frac{p-n}{p-1}}\mu(S(Q))^{\frac{p}{p-1}}}{\log^{\frac{\lambda}{p-1}}\left(e+\frac{\ell(Q)^{-\frac{1}{2}}}{r}\right)}\right)^{1-\frac{1}{p}},
\end{eqnarray*}
where $S(Q)\subset F'$ denotes the \lq\lq shadow\rq\rq\ of a cube $Q,$ i.e. those points $x\in F'$ for which $I_x\cap Q\neq \emptyset.$ Furthermore, denote by $\mathcal{W}_j$ all the cubes in the $j$th generation of Whitney cubes, i.e. $\mathcal{W}_j$ consists of the cubes $Q\in\mathcal{W}'$ of edge length between $2^{-j}$ and $2^{-(j+1)}.$ We deduce that
\begin{eqnarray*}
\mu(F')^p &\leq & C\int_{B(0,1)^+} \vert\nabla v\vert ^p\log^{\lambda}\left(e+\frac{\vert\nabla v\vert}{r}\right)\left(\sum_{j=1}^{\infty}\sum_{Q\in\mathcal{W}_j}\frac{2^{-j\frac{p-n}{p-1}}\mu(S(Q))^{\frac{p}{p-1}}}{\log^{\frac{\lambda}{p-1}}\left(e+\frac{2^{\frac{j}{2}}}{r}\right)}\right)^{p-1}\\
&\leq & C\mathcal{I}\left(\sum_{j=1}^{\infty}\frac{2^{-j\frac{p-n}{p-1}}\max\limits_{Q\in\mathcal{W}_j}\mu(S(Q))^\frac{1}{p-1}}{\log^{\frac{\lambda}{p-1}}\left(e+\frac{2^{\frac{j}{2}}}{r}\right)}\sum_{Q\in\mathcal{W}_j}\mu(S(Q))\right)^{p-1}\\
&\leq & C\mu(F')^{p-1}\mathcal{I}\left(\sum_{j=1}^{\infty}\frac{2^{-j\frac{p+1-n}{p-1}}}{\log^{\frac{\lambda}{p-1}}\left(e+\frac{2^{\frac{j}{2}}}{r}\right)}\right)^{p-1},
\end{eqnarray*}
where we have denoted the integral $\int_{B(0,1)^+} \vert\nabla v\vert ^p\log^{\lambda}\left(e+\frac{\vert\nabla v\vert}{r}\right)$ by $\mathcal{I}.$ Using the fact that $\mu(F')\geq c/3$ and estimating the sum in the right hand side of the above inequality, we have
\begin{align*}
1\leq C\mathcal{I}\left(\log^{-\frac{\lambda}{p-1}}\left(\frac{1}{r}\right)\right)^{p-1},
\end{align*}
when $n-1<p\leq n, \lambda\in\mathbb{R};$
\begin{align*}
1\leq C\mathcal{I}\left(\log^{-\frac{\lambda}{n-2}+1}\left(\frac{1}{r}\right)\right)^{n-2},
\end{align*}
when $p=n-1, \lambda >n-2;$
which implies that 
\begin{align*}
\mathcal{I}\geq C\log^{\lambda}\left(\frac{1}{r}\right) ~\text{or}~ C\log^{\lambda -(n-2)}\left(\frac{1}{r}\right)
\end{align*}
according to $n-1<p\leq n, \lambda\in\mathbb{R}$ or $p=n-1, \lambda >n-2.$ This proves the lemma.
\end{proof}
\begin{definition}\label{porosity}
We say that $E\subset\mathbb{R}^{n-1}$ is $(p,\lambda)$-porous, if for $H^{n-1}$-a.e. $x\in E,$ there is a sequence of $r_i>0$ and a constant $c_x>0$ such that $r_i\rightarrow 0$ as $i\rightarrow\infty$ and each $(n-1)$-dimensional ball $B(x,r_i)$ contains\\
(i) a ball $B_i\subset B(x,r_i)\setminus E$ of radius $R_i$ with $R_i^{n-p}\log^{\lambda}\left(\frac{1}{R_i}\right)\geq C_x{r_i}^{n-1}$ when $1\leq p<n-1$ and $\lambda\in\mathbb{R},$\\
(ii) a ball $B_i\subset B(x,r_i)\setminus E$ of radius $R_i$ with $R_i\log^{\lambda}\left(\frac{1}{R_i}\right)\geq C_x{r_i}^{n-1}$ when $p=n-1$ and $\lambda\leq n-2,$\\
(iii) a continuum $F_i\subset B(x,r_i)\setminus E$ of diameter $R_i$ with $R_i\log^{\lambda-(n-2)}\left(\frac{1}{R_i}\right)\geq C_x{r_i}^{n-1}$ when $p=n-1$ and $\lambda>n-2,$\\
(iv) a continuum $F_i\subset B(x,r_i)\setminus E$ of diameter $R_i$ with $R_i^{n-p}\log^{\lambda}\left(\frac{1}{R_i}\right)\geq C_x{r_i}^{n-1}$ when $n-1<p<n$ and $\lambda\in\mathbb{R},$\\
(v) a continuum $F_i\subset B(x,r_i)\setminus E$ of diameter $R_i$ with $\log^{\lambda-(n-1)}\left(\frac{1}{R_i}\right)\geq C_x{r_i}^{n-1}$ when $p=n$ and $\lambda<n-1,$\\
(vi) a continuum $F_i\subset B(x,r_i)\setminus E$ of diameter $R_i$ with $\left(\log\log\left(\frac{1}{R_i}\right)\right)^{1-n}\geq C_x{r_i}^{n-1}$ when $p=n$ and $\lambda=n-1.$\\
Again, the definition of porosity is same as in \cite{Kos99} for $\lambda=0.$
\end{definition}
Notice that we have replaced the round holes by holes of suitable diameter in some cases and there is a change in the power of the logarithmic term for different $p$ and also there is a mismatch in the power of the logarithmic term for different $\lambda$ when $p=n-1.$ Again we will ignore the case $p=1, \lambda\leq 0$ as in the planar case because of the same reason.\\
\indent To prove that porous sets are removable, we need help of the following lemma.

\begin{theorem}\label{main}
If $E$ is $(p,\lambda)$-porous, $1<p<n, \lambda\in\mathbb{R}$ or $p=1, \lambda>0,$ then $E$ is $(p,\lambda)$-removable. This also holds when $p=n, \lambda\leq n-1.$ 
\end{theorem}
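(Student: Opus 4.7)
The plan is to transplant the strategy of Theorem~\ref{0} to $\mathbb{R}^n$, with the crucial new ingredient advertised at the start of this section: reflection across the hyperplane together with a quasicontinuous representative, which lets us chain through $E$ rather than around it. As in Theorem~\ref{0}, it is enough to consider $u\in W^{1,\Psi}(B(0,2)\setminus E)\cap C^1(B(0,2)\setminus E)$. I fix $x=(x',0)\in E$ at which the one-sided limits $u^{\pm}(x)$ both exist, the porosity condition holds, and
\begin{equation*}
\lim_{r\to 0}r^{-(n-1)}\int_{B(x,r)}\Psi(\vert\nabla u\vert)\,dx=0,
\end{equation*}
the last being the higher-dimensional analogue of \eqref{1} coming from the covering arguments of \cite[p.~118]{Zie89}. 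Assuming $u^+(x)\neq u^-(x)$, I normalize (subtracting a constant, scaling, truncating) so that $u=1$ on an upper vertical segment $A^+$ through $x$ and $u=0$ on $A^-$. The target is a lower bound $\int_{B(x,r_i)}\Psi(\vert\nabla u\vert)\geq cC_x r_i^{n-1}$, which would contradict the display above.

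For each $r_i$, let $\tilde u$ be the even reflection of $u|_{B(x,r_i)^+}$ across $\mathbb{R}^{n-1}$, passed to its quasicontinuous representative, and let $\hat u$ be the analogous reflection from below. Both belong to $W^{1,\Psi}(B(x,r_i))$ with energies dominated by $\int_{B(x,r_i)}\Psi(\vert\nabla u\vert)$, and each coincides pointwise with $u$ at every point of $\mathbb{R}^{n-1}\setminus E$. In the ball-hole cases Definition~\ref{porosity}(i)--(ii), the porosity supplies $B_i\subset\mathbb{R}^{n-1}\setminus E$ of radius $R_i$; its $n$-dimensional thickening $\hat B_i$ avoids $E$, and on $\tilde u$ I repeat the cube-and-Poincar\'e averaging reduction of Case~I of Theorem~\ref{0} to arrange $\dashint_{B'}\tilde u\geq 61/81$ and $\dashint_{\hat B_i}\tilde u\leq 60/81$, then run the telescopic chain (good/bad splitting when $\lambda\geq 0$, Jensen when $\lambda<0$); the key gain over the planar proof is that $\tilde u\in W^{1,\Psi}$ on the whole ball, so the chain is unobstructed by $E$. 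In the continuum-hole cases Definition~\ref{porosity}(iii)--(vi), the porosity supplies a continuum $F_i\subset\mathbb{R}^{n-1}\setminus E$ of diameter $R_i$, and connectedness gives $H^1(F_i)\geq R_i$. I form $w=\tilde u-\hat u\in W^{1,\Psi}(B(x,r_i))$, which by the above trace identity vanishes pointwise on $F_i$. After the usual averaging reduction yielding $\dashint_{Q_1}w\geq 1/2$ on a largest Whitney cube of $B(y,R_i)^+$ for some $y\in F_i$, I apply Lemma~\ref{3'} to $w$ and $F=F_i$ to harvest the correct $R_i$-bound. Porosity then upgrades this in each case to $\int_{B(x,r_i)}\Psi(\vert\nabla u\vert)\geq cC_x r_i^{n-1}$, yielding the desired contradiction.

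The main obstacle I anticipate is the reflection-plus-quasicontinuity step: one has to verify that $\tilde u$ and $\hat u$ genuinely lie in $W^{1,\Psi}$ across the whole hyperplane section of $B(x,r_i)$ (and not merely off $E$), and that their quasicontinuous versions satisfy $\tilde u=\hat u$ on $F_i$ so that $w$ truly vanishes on $F_i$ in the sense required by Lemma~\ref{3'}. Once this is in place, the ball-hole subcases become an almost verbatim transcription of Cases~I--III of Theorem~\ref{0} with the exponent $1$ in $r_i^{n-1}$ replaced by $n-1$, and the continuum-hole subcases are routed through Lemma~\ref{3'}; the $\lambda$-dependent exponent adjustments for $p=n-1$ and $p=n$ come already packaged inside Lemma~\ref{3'} and Definition~\ref{porosity}, so beyond the reflection step I expect only careful bookkeeping of the logarithmic powers.
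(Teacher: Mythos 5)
Your proposal tracks the paper's high-level strategy quite faithfully: reduce to showing a lower bound $\int_{B(x,r_i)}\Psi(|\nabla u|)\geq C_x r_i^{n-1}$, normalize, split into ball-hole versus continuum-hole cases of Definition~\ref{porosity}, run a telescopic chain in the first group and invoke Lemma~\ref{3'} in the second, then combine with porosity. However, there is a genuine gap precisely at the point the paper flags as the main new difficulty in $\mathbb{R}^n$, namely obtaining a ball $B'\subset B^n(x,r_i)^+$ centred on the vertical segment $A^+$ with $\dashint_{B'}u\geq 61/81$. You write that you \emph{repeat the cube-and-Poincar\'e averaging reduction of Case~I of Theorem~\ref{0}} on $\tilde u$ to arrange this, but that planar reduction hinges on Fubini plus the fundamental theorem of calculus applied to the set $\{u=1\}\supset A^+$, which is a line segment; this controls horizontal averages only because in $\mathbb{R}^2$ a segment has positive length in each slice. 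For $n\geq 3$ and $p\leq n-1$ a segment has zero $p$-capacity and zero $(n-1)$-dimensional cross-sectional measure, so the Fubini--FTC step yields nothing, which is exactly why the paper cannot and does not reuse the planar reduction. The paper instead reflects $u|_{B^+}$ to get $v\in W^{1,1}((0,1)^n)$ and uses the $1$-quasicontinuity of the precise representative of $v$ (together with a Poincar\'e inequality to pass from balls centred at $x$ to tangent balls $\hat B_i$, and the fact that $\operatorname{cap}_1$-null sets and their projections have vanishing $H^{n-1}$-measure) to conclude that $\lim_i v_{\hat B_i}=u^+(x)$ for $H^{n-1}$-a.e. $x\in E$; this, not the planar averaging, is what produces $B'$. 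You do mention reflection and a quasicontinuous representative, but you deploy them only to assert $\tilde u,\hat u\in W^{1,\Psi}$ across the hyperplane and to make $w=\tilde u-\hat u$ vanish on $F_i$; your stated purpose, \emph{chain through $E$ rather than around it}, misreads what the reflection buys, since the chain in both the paper and your outline in fact stays in the upper half-ball and passes through the hole, never through $E$. Until the convergence-of-averages step is supplied, the proof does not close.

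Two smaller remarks. First, your idea of applying Lemma~\ref{3'} to $w=\tilde u-\hat u$, which vanishes \emph{exactly} on $F_i$, is a tidy alternative to the paper's device of scaling $u$ by $81/120$ and working with the inequality $u\leq 1/36$ on a large subset of $F_i$; but you still owe the reader the dichotomy on $\dashint_{B_0^+}u$ (or $\dashint_{B_0^+}w$) that the paper makes explicit, since the inequality $\dashint_{Q_1}w\geq\frac12$ on the largest Whitney cube is not automatic and one must fall back on the telescopic chain when it fails. Second, the comparison $\int\Psi(|\nabla w|)\leq C\int_{B(x,r_i)}\Psi(|\nabla u|)$ that you need to harvest the conclusion from Lemma~\ref{3'} should be stated, though it is routine from the doubling of $\Psi$ and the reflection.
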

\begin{proof}
Let $E\subset I^{n-1}=(0,1)^{n-1}$ and $u\in W^{1,\Psi}(B(0,2)\setminus E)\cap C^1(B(0,2)\setminus E).$ As in the planar case, it suffices to show that
\begin{equation}\label{5}
\int_{B^n(x,r_i)}\Psi(\vert\nabla u(x)\vert)\, dx\geq C_x r_i^{n-1}
\end{equation}
for all large enough $i$ whenever $x=(x_1,\ldots,x_{n-1},0)\in E$ is such that the one-sided limits do not coincide at $x$ and the porosity condition holds at $x.$ Here $B^n(x,r_i)$ is the $n$-dimensional ball corresponding to the $(n-1)$-dimensional ball $B(x,r_i)$ from the porosity condition. By symmetry and porosity we may assume that the upper limit is $1,$ the lower limit is $0$ and $u\leq \frac{1}{36}$ in a set $A\subset B_i$ with $H_{\infty}^{n-1}(A)\geq \frac{1}{2}H_{\infty}^{n-1}(B_i)$ or in a compact set $A\subset F_i$ with $H_{\infty}^{1}(A)\geq \frac{1}{3}H_{\infty}^{1}(F_i).$ \\
\indent Unlikely to the proof of Theorem \ref{0}, to get a ball $B'$ centred on $A^+=\{\left(x_1,\ldots,x_{n-1},t\right)\\:0<t<r_i\}$ with $\dashint_{B'}u\geq 61/81,$ one needs to do something else as $p$-capacity of a line segment in $\mathbb{R}^n$ is zero for $p\leq n-1.$ Towards this end, we prove that $\lim_{i\rightarrow\infty}u_{\hat{B}_i}$ exists for $\hat{B}_i=B\left((x_1,\ldots,x_{n-1},r_i/2),r_i/2\right)$ and is equal to $\lim_{t\rightarrow 0+}u\left(x_1,\ldots,x_{n-1},t\right)$ for $H^{n-1}$-a.e $x\in E.$\\
\indent Let $\epsilon>0.$ By reflection we obtain a function $v\in W^{1,1}((0,1)^n)$ which coincides with $u$ in the upper half plane. From the $1$-quasicontinuity of the precise representative of $v$ (for details see section $4.8$ of \cite{EG92}), we know that $\lim_{i\rightarrow\infty}v_{\hat{B}_i}$ exists outside a set $V$ with cap$_1(V)\leq\epsilon.$ Actually, \cite{EG92} considers balls centred at $x.$ However the usual Poincar\'e inequality gives this stronger statement outside an additional set of vanishing $H^{n-1}$-measure. Let
$$F=\{x\in E:\lim_{i\rightarrow\infty}v_{\hat{B}_i}\neq\lim_{t\rightarrow 0+}v\left(x_1,\ldots,x_{n-1},t\right)\}.$$
Since Hausdorff measure does not increase under projection, we have that $H^{n-1}(F)=0.$ Now, assuming that $i$ is large enough we can take a ball $B'\subset B^n(x,r_i)^+$ of radius $\frac{1}{2}r_i$ centred on $A^+$ with $\dashint_{B'}u\geq 61/81.$\\
\indent Fix a ball $B_0,$ the $n$-dimensional ball corresponding to $B_i,$ when in case (i) or (ii) of Definition \ref{porosity}. For the cases (iii)-(vi) in Definition \ref{porosity} we fix a ball $B_0$ of radius equal to the diameter of $F_i$ such that $F_i\subset \overline{B}_0\cap\mathbb{R}^{n-1}.$ Suppose that $\dashint_{B_0^+}u\leq 60/81.$ Then we use the telescopic argument for the two balls $B_0$ and $B'$ similarly to the proof of Theorem \ref{0} to get the lower bound
\begin{equation}\label{leq}
\int_{B^n(x,r_i)}\Psi(\vert\nabla u(x)\vert)\, dx\geq
\begin{cases}
C\log^{\lambda-(n-1)}\left(\frac{1}{R_i}\right)& ~\text{when}~p=n,~\lambda<n-1,\\
C\left(\log\log\left(\frac{1}{R_i}\right)\right)^{1-n}& ~\text{when}~p=n,~\lambda=n-1,\\
CR_i^{n-p}\log^{\lambda}\left(\frac{1}{R_i}\right)& ~\text{otherwise}.
\end{cases}
\end{equation}

\indent Suppose then that $\dashint_{B_0^+}u\geq 60/81.$ In the case of (i)-(ii) in Definition \ref{porosity} we use the Poincar\'e inequality, the Fubini theorem, the fundamental theorem of calculus and Jensen's inequality similarly to the proof of Theorem \ref{0} to get
\begin{equation*}
\dashint_{B^n(x,r_i)}\Psi(\vert\nabla u(x)\vert)\, dx\geq\Psi\left(\frac{C}{r_i}\right),
\end{equation*}
which contradicts with the fact that $\lim\limits_{r_i\rightarrow 0}\frac{1}{r_i^{n-1}}\int_{B^n(x,r_i)}\Psi(\vert\nabla u(x)\vert)\, dx=0$ for $H^{n-1}$-a.e. $x\in B(0,2).$ For (iii)-(vi) in Definition \ref{porosity} we apply Lemma \ref{3'} to $\frac{81}{120}u$ to conclude that
\begin{equation}\label{geq}
\int_{B^n(x,r_i)}\Psi(\vert\nabla u(x)\vert)\, dx\geq
\begin{cases}
CR_i^{n-p}\log^{\lambda}\left(\frac{1}{R_i}\right)&~\text{when}~n-1<p<n,~\lambda\in\mathbb{R}~\text{or}\\
& ~p=n,~\lambda\leq n-1,\\
CR_i\log^{\lambda-(n-2)}\left(\frac{1}{R_i}\right)&~\text{when}~p=n-1,~\lambda>n-2.
\end{cases}
\end{equation}
Taking the respective minimums of the two inequalities \eqref{leq} and \eqref{geq} and using the definition of porosity we get the inequality \eqref{5}. This completes the proof.  
\end{proof}
Next we give sufficient conditions for a set to be non-removable. 
\begin{theorem}\label{nonremovable}
Let $I^{n-1}\setminus E=\bigcup_{i=1}^{\infty}Q_i,$ $I=(0,1),$ where $Q_i$'s are pairwise disjoint open rectangles of length $r_i$ of one edge and of length $\sqrt{2}r_i^2$ of other edges in $I^{n-1}$ when $p=n-1, \lambda>n-2$ and $Q_i$'s are pairwise disjoint open cubes for the other values of the pair $(p,\lambda).$ Suppose that\\
$(i)~H^{n-1}(I^{n-1}\setminus\bigcup_{i=1}^{\infty}2Q_i)>0\quad\text{and}\quad\sum_{i=1}^{\infty}(\diam Q_i)^{n-p}\log^{\lambda}(1/\diam Q_i)<\infty,$ when $1<p<n-1, \lambda\in\mathbb{R}$ or $p=1, \lambda\geq 0$ or $n-1<p<n, \lambda\in\mathbb{R}$ or $p=n-1, \lambda\leq n-2;$\\
$(ii)~H^{n-1}(I^{n-1}\setminus\bigcup_{i=1}^{\infty}2Q_i)>0\quad\text{and}\quad\sum_{i=1}^{\infty}\diam Q_i\log^{\lambda-(n-2)}(1/\diam Q_i)<\infty,$ when $p=n-1, \lambda>n-2;$\\
$(iii)~H^{n-1}(I^{n-1}\setminus\bigcup_{i=1}^{\infty}(\diam Q_i)^{-\frac{1}{2}}Q_i)>0\quad\text{and}\quad\sum_{i=1}^{\infty}\log^{\lambda-(n-1)}(1/\diam Q_i)<\infty,$ when $p=n, \lambda<n-1;$\\
$(iv)~H^{n-1}(I^{n-1}\setminus\bigcup_{i=1}^{\infty}(\frac{R_i}{\diam Q_i}Q_i)>0\quad\text{and}\quad\sum_{i=1}^{\infty}(\log\log(1/\diam Q_i))^{1-n}<\infty,$ when $p=n, \lambda=n-1.$ Here $R_i=\exp(-\log(1/\diam Q_i)^{1/2}).$\\
Then $E$ is not $(p, \lambda)$-removable.

\end{theorem}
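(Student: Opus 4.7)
The proof parallels Theorem \ref{4}: for each of the four cases I construct an explicit $u \in W^{1,\Psi}(\Omega\setminus E) \cap C^1(\Omega\setminus E)$ whose two one-sided traces on $\{x_n=0\}$ differ on a subset of $E$ of positive $(n-1)$-dimensional Hausdorff measure, thereby ruling out removability. The positive-measure assumption on $I^{n-1}\setminus\bigcup(\text{thickened }Q_i)$ in each hypothesis supplies the disagreement set; the summability assumption is what will force $u$ into $W^{1,\Psi}$.

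For case (i) I use the distance construction from Theorem \ref{4}(i): set $u(x)=\min\{x_n/d(x,E),\,1/\sqrt{2}\}$ for $x_n\geq 0$ and $u(x)=0$ otherwise. Then $u$ is locally Lipschitz with $|\nabla u|$ bounded outside a union of hemispherical cones $\Delta_i$ of radius comparable to $\diam Q_i$ erected over each complementary cube, and $|\nabla u|\lesssim 1/d(x,E)$ on $\Delta_i$. Polar coordinates centred on $Q_i$ yield
\[
\int_{\Delta_i}\Psi(|\nabla u|)\,dx \leq C(\diam Q_i)^{n-p}\log^\lambda(1/\diam Q_i),
\]
summable by hypothesis. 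The traces disagree on $E\setminus\bigcup 2Q_i$, which has positive $(n-1)$-measure, so $u$ admits no $W^{1,\Psi}(\Omega)$ extension.

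For cases (iii) and (iv), where $p=n$, I repeat the path-integral construction of Theorem \ref{4}(ii)-(iii). For (iii), with $W_j=(\diam Q_j)^{-1/2}Q_j\times(-(\diam Q_j)^{1/2},(\diam Q_j)^{1/2})$ and
\[
f_j(x)=\bigl(|x-x_j|\log(1/\diam Q_j)\bigr)^{-1}\chi_{W_j\setminus(\diam Q_j)^{1/2}W_j}(x),
\]
set $u(x)=\inf_{\gamma_x}\int_{\gamma_x}\max_j f_j\,dH^1$ over rectifiable curves from $x$ to a fixed point in the lower half of $\Omega$. An $n$-dimensional polar computation of $\Psi(f_j)$ on the annular shell produces the factor $\log^{\lambda-(n-1)}(1/\diam Q_j)$, so the series assumption gives $u\in W^{1,\Psi}$; by construction $u\geq 1/2$ above $E$ while the lower trace vanishes on the positive-measure set from the hypothesis. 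Case (iv) is identical with the iterated-log weight and the scale $R_j=\exp(-\log(1/\diam Q_j)^{1/2})$, recovering the double-log sum.

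The main obstacle is case (ii), with $p=n-1$ and $\lambda>n-2$, where the anisotropic rectangles force a logarithmic-profile construction rather than a distance truncation. A hemispherical cone as in case (i) would yield the wasteful bound $\diam Q_i\log^\lambda(1/\diam Q_i)$, not matching the weaker hypothesis $\sum\diam Q_i\log^{\lambda-(n-2)}(1/\diam Q_i)<\infty$. Instead I assemble a path-integral construction modelled on cases (iii)-(iv) but adapted to the thin rectangles: the weight $f_i$ is supported on an anisotropic tube around $Q_i$ with longitudinal length $r_i$ and transverse annular cross-section of inner radius $\sim r_i^2$ and outer radius $\sim r_i$, and equals $(|y|\log(1/r_i))^{-1}$ in the transverse radial variable $y$. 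The transverse $(n-1)$-dimensional integration of $\Psi(f_i)$ produces the conformal-type factor $\log^{\lambda-(n-2)}(1/r_i)$, coming from the critical $p=n-1$ capacity of the transverse annulus, and multiplying by the long-edge length $r_i$ yields $C\,\diam Q_i\log^{\lambda-(n-2)}(1/\diam Q_i)$. The precise aspect ratio $\sqrt{2}r_i^2$ is tuned so that $\log(r_i/r_i^2)\approx \log(1/r_i)$, which is exactly the denominator used in the capacitary profile.
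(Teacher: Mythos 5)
Your cases (ii), (iii) and (iv) track the paper's argument closely: (ii) is the paper's ``Thermos flask'' construction phrased through an equivalent path-integral weight, and (iii)--(iv) match the paper's instruction to repeat the planar constructions from Theorem~\ref{4}(ii)--(iii) with the $n$-dimensional annular shells.

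Case (i), however, has a genuine gap. You transplant the distance construction $u(x)=\min\{x_n/d(x,E),1/\sqrt 2\}$ from Theorem~\ref{4}(i), but that construction is only viable for $p<2$, whereas case (i) of this theorem includes the whole range $1\le p<n$ apart from $\{p=n-1,\lambda>n-2\}$; for $n\ge 3$ this contains exponents $p\ge 2$. Concretely, in $\Delta_i$ one has $\vert\nabla u\vert\sim 1/d(x',\partial Q_i)$, and since $\partial Q_i$ has codimension $2$ in $\mathbb{R}^n$ the layer-cake (or polar) computation gives
\begin{equation*}
\int_{\Delta_i}\Psi(\vert\nabla u\vert)\,dx \sim (\diam Q_i)^{n-2}\int_0^{\diam Q_i} t^{1-p}\log^{\lambda}\left(\frac1t\right)\,dt.
\end{equation*}
This is finite and $\sim(\diam Q_i)^{n-p}\log^{\lambda}(1/\diam Q_i)$ precisely when $p<2$. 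For $p>2$ (and for $p=2$ with $\lambda\ge -1$) the inner integral diverges because of the singularity along $\partial Q_i$, and even for $p=2$, $\lambda<-1$ it evaluates to $(\diam Q_i)^{n-2}\log^{\lambda+1}(1/\diam Q_i)$, which is worse by a logarithm than the hypothesis permits. So $u\notin W^{1,\Psi}(\Omega\setminus E)$ cannot be guaranteed and the argument breaks down, for instance when $n=3$ and $p=2,\lambda\le 1$ or $2<p<3$. The paper instead reuses the same path-integral mechanism as in your cases (iii)--(iv), but with a \emph{constant} (non-singular) weight: set $W_i=(2Q_i)\times(-\diam Q_i,\diam Q_i)$, $f_i=(\diam Q_i)^{-1}\chi_{W_i}$, $g=\max_i f_i$, and $u(x)=\inf_{\gamma_x}\int_{\gamma_x}g\,dH^1$. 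Then $\vert\nabla u\vert\le g$, and
\begin{equation*}
\int_{\Omega\setminus E}\Psi(\vert\nabla u\vert)\,dx\le\sum_i\vert W_i\vert\,\Psi\bigl((\diam Q_i)^{-1}\bigr)\sim\sum_i(\diam Q_i)^{n-p}\log^{\lambda}\left(\frac1{\diam Q_i}\right),
\end{equation*}
valid for all $1\le p<n$ because the weight has no singularity near $\partial Q_i$, while $u\ge c$ in the upper half and the lower trace vanishes on $I^{n-1}\setminus\bigcup 2Q_i$. Replacing your distance function in case (i) with this construction closes the gap.
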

\begin{proof}
(i) Set $\Omega=I^{n-1}\times(-1,1).$ Define $W_i=(2Q_i)\times(-\diam Q_i,\diam Q_i)$ and $f_i(x)=(\diam Q_i)^{-1}\chi_{W_i}(x)$ for $x\in\Omega\setminus E$ and for every $i.$ For every $x\in\Omega\setminus E$ define $g(x)=\max_i f_i(x)$ and $u(x)=\inf_{\gamma_x}\int_{\gamma_x}g(x)\, dH^1,$ where the infimum is taken over all the rectifiable curves that join $x$ to $y=(\frac{1}{2},\ldots,\frac{1}{2},-1)$ in $(I^{n-1}\times[-1,1])\setminus E.$ To see that $u\in W^{1,\Psi}(\Omega\setminus E),$ observe that $$\int_{\Omega\setminus E}\Psi(\vert\nabla u\vert)\, dx\leq C\sum_{i=1}^{\infty}(\diam Q_i)^{n-p}\log^{\lambda}(1/\diam Q_i)<\infty.$$ As $u\geq 1$ in the upper half of $\Omega$ and $\lim_{t\rightarrow 0-}u(x',t)=0$ for all $x'\in I^{n-1}\setminus(\bigcup_{i=1}^{\infty}2Q_i),$ $E$ is not removable for $u.$\\
(ii) Set $\Omega=I^{n-1}\times(-1,1).$ We rotate $Q_i$ to form a cylinder of revolution in $\mathbb{R}^n$ of length $r_i$ and of radius of base $r_i^2$ and denote its axis by $J_i.$ Let $A_i$ denote the cylindrical annulus of length $r_i,$ inner radius $r_i^2$ and outer radius $r_i.$ Now construct $W_i$ by closing the two faces of the inner cylinder by half balls of radius $r_i^2$ and also two faces of the outer cylinder by half balls of radius $r_i,$ i.e., $W_i$ is a kind of two-sided Thermos flask (see Figure \ref{fig3}). For every $i,$ we define
\begin{equation*}
u_i(x)=
 \begin{cases}
   \frac{\log\left(\frac{1}{d(x,J_i)}\right)-\log\left(\frac{1}{r_i}\right)}{\log\left(\frac{1}{r_i^2}\right)-\log\left(\frac{1}{r_i}\right)}& \text{if $x\in W_i$},\\
  1& \text{if $x$ lies inside the inner rounded cylinder},\\
  0& \text{if $x$ lies outside the outer rounded cylinder}.
 \end{cases}
\end{equation*}
\begin{figure}[h!]
\begin{center}
\begin{tikzpicture}[scale=4]
\draw [dashed] (-0.25,0) ellipse (0.05cm and 0.25cm);
\draw [dashed] (0.25,0) ellipse (0.05cm and 0.25cm);
\draw (-0.25,0.25) -- (0.25,0.25);
\draw (-0.25,-0.25) -- (0.25,-0.25);
\draw (0.25,0.25) .. controls (0.25,0.25) and (0.5,0.25)..(0.5,0)
                  .. controls (0.5,-0.25) and (0.25,-0.25)..(0.25,-0.25);
\draw (-0.25,0.25) .. controls (-0.25,0.25) and (-0.5,0.25)..(-0.5,0)
                  .. controls (-0.5,-0.25) and (-0.25,-0.25)..(-0.25,-0.25);
\draw [dashed] (-0.25,0) ellipse (0.07cm and 0.5cm);
\draw [dashed] (0.25,0) ellipse (0.07cm and 0.5cm);                  
\draw (-0.25,0.5) -- (0.25,0.5);
\draw (-0.25,-0.5) -- (0.25,-0.5);
\draw (0.25,0.5) .. controls (0.25,0.5) and (0.75,0.5) .. (0.75,0)
                 .. controls (0.75,-0.5) and (0.25,-0.5) .. (0.25,-0.5);
\draw (-0.25,0.5) .. controls (-0.25,0.5) and (-0.75,0.5) .. (-0.75,0)
                 .. controls (-0.75,-0.5) and (-0.25,-0.5) .. (-0.25,-0.5);
\filldraw (0.25,0) circle (0.15pt);
\filldraw (-0.25,0) circle (0.15pt);  
\draw (1,0)--(1,0.5);  
\draw (0.95,0) -- (1.05,0);                                               
\draw (0.95,0.5) -- (1.05,0.5);
\draw (1.05,0.25) node {$r_i$};
\draw (0.85,0)--(0.85,0.25);  
\draw (0.80,0) -- (0.90,0);                                               
\draw (0.80,0.25) -- (0.90,0.25);
\draw (0.90,0.12) node {$r_i^2$};
\draw (-0.25,0.70) -- (0.25,0.70);
\draw (0.25,0.65) -- (0.25,0.75);
\draw (-0.25,0.65) -- (-0.25,0.75);
\draw (0,0.75) node {$r_i$};
\draw (0,0) node {$u_i=1$};
\draw (0.90,-0.40) node {$u_i=0$};
\draw (0.5,-0.25) node {$W_i$};
\end{tikzpicture}
\end{center}
\caption{}
\label{fig3}
\end{figure}
Notice that $u_i$ is Lipschitz for each $i.$ Define $u'(x)=\max_j u_j(x)$ and for every $x=(x_1,\ldots,x_n)\in\Omega\setminus E,$
\begin{equation*}
u(x)=
 \begin{cases}
  u'(x)& \text{if $x_n>0$},\\
  1& \text{if $x_n<0$},\\
  1& \text{if $x_n=0$ and $x\in Q_i$ for some $i$}.
 \end{cases}
\end{equation*}
Then by calculating the gradient and using polar coordinates, we obtain
\begin{eqnarray*}
\int_{\Omega\setminus E}\Psi(\vert\nabla u\vert)\, dx &=& \int_{\Omega}\vert\nabla u\vert^{n-1}\log^{\lambda}(e+\vert\nabla u\vert)\, dx\\
&\leq & C\sum_{i=1}^{\infty}\frac{r_i}{\left(\log\left(\frac{1}{r_i^2}\right)-\log\left(\frac{1}{r_i}\right)\right)^{n-1}}\int_{r_i^2}^{r_i}\frac{t^{n-2}}{t^{n-1}}\log^{\lambda}\left(\frac{1}{t}\right)\, dt\\
&\leq & C\sum_{i=1}^{\infty}r_i\log^{\lambda-(n-2)}\left(\frac{1}{r_i}\right)<\infty .
\end{eqnarray*}
Hence $u\in W^{1,\Psi}(\Omega\setminus E),$ but $u$ can not be extended as a Sobolev function in $\Omega.$\\
\indent Cases (iii) and (iv) can be proved similarly as cases (ii) and (iii) of Theorem \ref{4}, respectively.
\end{proof}
\begin{proof}[$\boldsymbol{Proof ~of ~Theorem ~A ~for ~n\geq 3}$] By Theorem \ref{main} and Section 3 it suffices to construct a $(p,\lambda)$-porous compact set $E\subset [0,1]^{n-1}=I^{n-1}$ of positive $H^{n-1}$-measure such that $E$ is not $(p,\lambda-\epsilon)$-removable for any $\epsilon>0.$\\
\indent Let first $1<p<n-1,\lambda\in\mathbb{R}$ or $p=1, \lambda>0$ or $n-1<p<n, \lambda\in\mathbb{R}$ or $p=n-1, \lambda\leq n-2.$ Set $A=(n-1)/(n-p).$ Let $l_0=1.$ We begin by deleting a cube $Q_0$ of edge length $s2^{-2A}$ from the centre of $I^{n-1},$ where $\frac{1}{4}\leq s \leq\frac{1}{2}$ can change its value in every stage. We subdivide $I^{n-1}\setminus Q_0$ into cubes of different sizes: $2^{n-1}$ of them of size $l_1=\frac{1}{2}(1-s2^{-2A})$ and the rest of size $s2^{-2A}.$ The cubes of size $s2^{-2A}$ correspond to translating the central cube along the coordinate directions. This determines the value of $s$ at this stage as we need $2^{2A}/s$ to be an odd integer. See Figure \ref{fig4}. Write $\mathcal{W}^1$ for the collection of all the cubes in the first subdivision.
\begin{figure}[h!]
\begin{center}
\begin{tikzpicture}[scale=4]
\draw (0,0) rectangle (1,1);
\draw (0,3/7) rectangle (1,4/7);
\draw (3/7,0) rectangle (4/7,1);
\filldraw [gray] (3/7,3/7) rectangle (4/7,4/7);
\draw (1/7,3/7) rectangle (2/7,4/7);
\draw (5/7,3/7) rectangle (6/7,4/7);
\draw (3/7,1/7) rectangle (4/7,2/7);
\draw (3/7,5/7) rectangle (4/7,6/7);
\draw [decorate, decoration=brace] (-0.07,0) -- (-0.07,3/7) node [midway,xshift=-0.25cm] {\footnotesize $l_1$};
\draw [decorate, decoration=brace] (-0.05,3/7) -- (-0.05,4/7) node [midway,xshift=-0.55cm] {\footnotesize $s2^{-2A}$};
\draw (0.5,0.5) node {$Q_0$};
\end{tikzpicture}
\end{center}
\caption{}
\label{fig4}
\end{figure}
Then we delete a cube of edge length $s2^{-4A}/2^{\frac{2\lambda}{n-p}}$ from the centre of each cube $Q_{l_1}$ in $\mathcal{W}^1$ whose size is at least $\frac{1}{2}l_1.$ Write $\mathcal{W}^2$ for the cubes in $\mathcal{W}^1$ whose edge lengths are less than $\frac{1}{2}l_1$ and the cubes obtained from the subdivision of the cubes subject to the central deletion. The subdivision of such a cube results in cubes of two sizes: $2^{n-1}$ cubes of size $l_2=\frac{1}{2}(\ell(Q_{l_1})-s2^{-4A}/2^{\frac{2\lambda}{n-p}}),$ the rest of size $s2^{-4A}/2^{\frac{2\lambda}{n-p}}.$ We repeat the construction in the following way: at stage $i-1,$ for $i\in M=\mathbb{N}\setminus\{2^j:j\in\mathbb{N}\},$ we delete  a cube of size $s2^{-2iA}/i^{\frac{\lambda}{n-p}}$ from the centre of each cube in $\mathcal{W}^{i-1}$ of size at least $\frac{1}{2}l_{i-1}$ but when $i=2^j,$ $j\in\mathbb{N},$ we delete a cube of size $s2^{-2^jA}/2^{\frac{j\lambda}{n-p}}$ from the centre of each cube in $\mathcal{W}^{2^j-1}$ of size at least $\frac{1}{2}l_{2^j-1}.$ Then the set $$E=\left(\bigcap_{i\in M}\mathcal{W}^i\right)\bigcap\left(\bigcap_{j\in\mathbb{N}}\mathcal{W}^{2^j}\right)$$ is clearly $(p,\lambda)$-porous and finiteness of the following sum, which follows similarly as in (\ref{9}),
$$\sum_{i\in M}2^{i(n-1)}\left(\frac{2^{-2iA}}{i^{\frac{\lambda}{n-p}}}\right)^{n-p}\log^{\lambda-\epsilon}\left(2^{2iA}i^{\frac{\lambda}{n-p}}\right)+\sum_{j\in\mathbb{N}}2^{2^j(n-1)}\left(\frac{2^{-2^jA}}{2^{\frac{j\lambda}{n-p}}}\right)^{n-p}\log^{\lambda-\epsilon}\left(2^{2^jA}2^{\frac{j\lambda}{n-p}}\right)$$
gives the non-removability for every $\epsilon>0.$\\
\indent Let then $p=n$ and $\lambda<n-1.$ We begin by deleting a cube $Q_1$ of edge length $s2^{-1}\exp(-2^{\frac{1}{n-1-\lambda}})$ and then let $l_1=\frac{1}{2}(1-s2^{-1}\exp(-2^{\frac{1}{n-1-\lambda}})).$ In the $(i-1)$-th step, for $i\in\mathbb{N},$ we delete a cube of edge length $s2^{-i}\exp(-2^{\frac{i}{n-1-\lambda}})$ from the centre of each cube whose edge length is at least $\frac{1}{2}l_{i-1}.$ Then we take the set $E$ as the intersection of the collections of the remaining cubes as before and the rest is easy to verify.\\
\indent When $p=n$ and $\lambda=n-1,$ we delete a cube of edge length $s2^{-i}\exp(\exp(2^i))$ from the centre of each cube whose edge length is at least $\frac{1}{2}l_{i-1}.$\\
\indent Finally, let $p=n-1,\lambda>n-2.$ Again let $l_0=1.$ Here we begin by deleting a rectangle $Q_0$ of length $s2^{-2(n-1)}$ of one edge and of length $(s2^{-2(n-1)})^2$ of other edges from the centre of $I^{n-1}.$ We subdivide $I^{n-1}\setminus Q_0$ into cubes of different sizes: $2^{n-1}$ of them of size $l_1=\frac{1}{2}(1-(s2^{-2(n-1)})^2)$ and of rest of size $(s2^{-2(n-1)})^2.$ At this stage, $s$ can be chosen such that $2^{2(n-1)}/s$ is an odd integer. See Figure \ref{fig5}. Write $\mathcal{W}^1$ for the collection of all cubes in the first subdivision.
\begin{figure}[h!]
\begin{center}
\begin{tikzpicture}[scale=4]
\draw (0,0) rectangle (1,1);
\draw (2/5,12/25) rectangle (3/5,13/25);
\filldraw [gray] (2/5,12/25) rectangle (3/5,13/25);
\draw (0,12/25) rectangle (1/25,13/25);
\draw (1/25,12/25) rectangle (2/25,13/25);
\draw (2/25,12/25) rectangle (3/25,13/25);
\draw (3/25,12/25) rectangle (4/25,13/25);
\draw (4/25,12/25) rectangle (5/25,13/25);
\draw (5/25,12/25) rectangle (6/25,13/25);
\draw (6/25,12/25) rectangle (7/25,13/25);
\draw (7/25,12/25) rectangle (8/25,13/25);
\draw (8/25,12/25) rectangle (9/25,13/25);
\draw (9/25,12/25) rectangle (10/25,13/25);
\draw (3/5,12/25) rectangle (16/25,13/25);
\draw (16/25,12/25) rectangle (17/25,13/25);
\draw (17/25,12/25) rectangle (18/25,13/25);
\draw (18/25,12/25) rectangle (19/25,13/25);
\draw (19/25,12/25) rectangle (20/25,13/25);
\draw (20/25,12/25) rectangle (21/25,13/25);
\draw (21/25,12/25) rectangle (22/25,13/25);
\draw (22/25,12/25) rectangle (23/25,13/25);
\draw (23/25,12/25) rectangle (24/25,13/25);
\draw (24/25,12/25) rectangle (25/25,13/25);
\draw (12/25,0) rectangle (13/25,1/25);
\draw (12/25,1/25) rectangle (13/25,2/25);
\draw (12/25,2/25) rectangle (13/25,3/25);
\draw (12/25,3/25) rectangle (13/25,4/25);
\draw (12/25,4/25) rectangle (13/25,5/25);
\draw (12/25,5/25) rectangle (13/25,6/25);
\draw (12/25,6/25) rectangle (13/25,7/25);
\draw (12/25,7/25) rectangle (13/25,8/25);
\draw (12/25,8/25) rectangle (13/25,9/25);
\draw (12/25,9/25) rectangle (13/25,10/25);
\draw (12/25,10/25) rectangle (13/25,11/25);
\draw (12/25,11/25) rectangle (13/25,12/25);
\draw (12/25,13/25) rectangle (13/25,14/25);
\draw (12/25,14/25) rectangle (13/25,15/25);
\draw (12/25,15/25) rectangle (13/25,16/25);
\draw (12/25,16/25) rectangle (13/25,17/25);
\draw (12/25,17/25) rectangle (13/25,18/25);
\draw (12/25,18/25) rectangle (13/25,19/25);
\draw (12/25,19/25) rectangle (13/25,20/25);
\draw (12/25,20/25) rectangle (13/25,21/25);
\draw (12/25,21/25) rectangle (13/25,22/25);
\draw (12/25,22/25) rectangle (13/25,23/25);
\draw (12/25,23/25) rectangle (13/25,24/25);
\draw (12/25,24/25) rectangle (13/25,1);
\draw [decorate, decoration=brace] (-0.07,0) -- (-0.07,12/25) node [midway,xshift=-0.25cm] {\footnotesize $l_1$};
\draw [decorate, decoration=brace] (2/5,26/25) -- (3/5,26/25) node [midway,yshift=0.35cm] {\footnotesize $s2^{-2(n-1)}$};
\draw (1.22,1/2) node[scale=0.8] {$(s2^{-2(n-1)})^2$};
\draw (4/25,16/25) node {$Q_0$};
\draw [->] (6/25,16/25) arc (95:30:1/5);
\end{tikzpicture}
\end{center}
\caption{}
\label{fig5}
\end{figure}
We repeat the construction in the following manner: at stage $i-1,$ for $i\in M=\mathbb{N}\setminus\{2^j:j\in\mathbb{N}\},$ we delete a rectangle of length $s2^{-2i(n-1)}/i^{\lambda-(n-2)}$ of one edge and of length $\left(s2^{-2i(n-1)}/i^{\lambda-(n-2)}\right)^2$ of other edges from the centre of each cube in $\mathcal{W}^{i-1}$ of size at least $\frac{1}{2}l_{i-1},$ where
$$l_{i-1}=\frac{1}{2}\left(\ell(Q)-\left(s2^{-2i(n-1)}/i^{\lambda-(n-2)}\right)^2\right)$$
but when $i=2^j,$ $j\in\mathbb{N},$ delete a rectangle of length $s2^{-2^j(n-1)}/2^{j(\lambda-(n-2))}$ of one edge and of length $\left(s2^{-2^j(n-1)}/2^{j(\lambda-(n-2))}\right)^2$ of other edges from the centre of each cube in $\mathcal{W}^{2^j-1}$ of size at least $\frac{1}{2}l_{2^j-1},$ where
$$l_{2^j-1}=\frac{1}{2}\left(\ell(Q)-\left(s2^{-2^j(n-1)}/2^{j(\lambda-(n-2))}\right)^2\right).$$ Then write $$E=\left(\bigcap_{i\in M}\mathcal{W}^i\right)\bigcap\left(\bigcap_{j\in\mathbb{N}}\mathcal{W}^{2^j}\right),$$
which is our desired set. It is easy to check that the sum $\sum_{i=1}^{\infty}\diam Q_i\log^{\lambda-(n-2+\epsilon)}$ is finite for every $\epsilon>0,$ where $Q_i$ are the complementary rectangles in Theorem \ref{nonremovable}. This completes the proof of the theorem.
\end{proof}

\def\bibname{References}
\nocite {HK00,HK98}

\bibliography{nijjwal}

\def\cprime{$'$} \def\cprime{$'$} \def\cprime{$'$}
\begin{thebibliography}{Mat95}

\bibitem[Ada77]{Ada77}
R.~A. Adams.
\newblock On the {O}rlicz-{S}obolev imbedding theorem.
\newblock {\em J. Functional Analysis}, 24(3):241--257, 1977.

\bibitem[Dej]{Dej}
Noel Dejarnette.
\newblock Is an {O}rlicz-{P}oincar\'e inequality an open ended condition, and
  what does that mean?.
\newblock in preparation.

\bibitem[EG92]{EG92}
Lawrence~C. Evans and Ronald~F. Gariepy.
\newblock {\em Measure theory and fine properties of functions}.
\newblock Studies in Advanced Mathematics. CRC Press, Boca Raton, FL, 1992.

\bibitem[HK98]{HK98}
Juha Heinonen and Pekka Koskela.
\newblock Quasiconformal maps in metric spaces with controlled geometry.
\newblock {\em Acta Math.}, 181(1):1--61, 1998.

\bibitem[HK00]{HK00}
Piotr Haj{\l}asz and Pekka Koskela.
\newblock Sobolev met {P}oincar\'e.
\newblock {\em Mem. Amer. Math. Soc.}, 145(688):x+101, 2000.

\bibitem[Kos99]{Kos99}
Pekka Koskela.
\newblock Removable sets for {S}obolev spaces.
\newblock {\em Ark. Mat.}, 37(2):291--304, 1999.

\bibitem[KZ08]{KZ08}
Stephen Keith and Xiao Zhong.
\newblock The {P}oincar\'e inequality is an open ended condition.
\newblock {\em Ann. of Math. (2)}, 167(2):575--599, 2008.

\bibitem[Mat95]{Mat95}
Pertti Mattila.
\newblock {\em Geometry of sets and measures in {E}uclidean spaces}, volume~44
  of {\em Cambridge Studies in Advanced Mathematics}.
\newblock Cambridge University Press, Cambridge, 1995.
\newblock Fractals and rectifiability.

\bibitem[RR91]{RR91}
M.~M. Rao and Z.~D. Ren.
\newblock {\em Theory of {O}rlicz spaces}, volume 146 of {\em Monographs and
  Textbooks in Pure and Applied Mathematics}.
\newblock Marcel Dekker Inc., New York, 1991.

\bibitem[Tuo04]{Tuo04}
Heli Tuominen.
\newblock Orlicz-{S}obolev spaces on metric measure spaces.
\newblock {\em Ann. Acad. Sci. Fenn. Math. Diss.}, (135):86, 2004.
\newblock Dissertation, University of Jyv{\"a}skyl{\"a}, Jyv{\"a}skyl{\"a},
  2004.

\bibitem[Zie89]{Zie89}
William~P. Ziemer.
\newblock {\em Weakly differentiable functions}, volume 120 of {\em Graduate
  Texts in Mathematics}.
\newblock Springer-Verlag, New York, 1989.
\newblock Sobolev spaces and functions of bounded variation.

\end{thebibliography}
\bibliographystyle{alpha}

\end{document}